\numberwithin{equation}{section}
\theoremstyle{definition}
\numberwithin{equation}{section}
\newcommand{\ncom}{\newcommand}
\ncom{\beq}{\begin{equation}}
\ncom{\eeq}{\end{equation}}
\ncom{\bea}{\begin{eqnarray*}}
\ncom{\eea}{\end{eqnarray*}}
\ncom{\beqa}{\begin{eqnarray}}
\ncom{\eeqa}{\end{eqnarray}}
\ncom{\nno}{\nonumber}
\ncom{\non}{\nonumber}
\ncom{\ds}{\displaystyle}
\ncom{\half}{\frac{1}{2}}
\ncom{\mbx}{\makebox{.25cm}}
\ncom{\hs}{\mbox{\hspace{.25cm}}}
\ncom{\rar}{\rightarrow}
\ncom{\Rar}{\Rightarrow}
\ncom{\noin}{\noindent}
\ncom{\bc}{\begin{center}}
\ncom{\ec}{\end{center}}
\ncom{\sz}{\scriptsize}
\ncom{\rf}{\ref}
\ncom{\s}{\sqrt{2}}
\ncom{\sgm}{\sigma}
\ncom{\Sgm}{\Sigma}
\ncom{\psgm}{\sigma^{\prime}}
\ncom{\dt}{\delta}
\ncom{\Dt}{\Delta}
\ncom{\lmd}{\lambda}
\ncom{\Lmd}{\Lambda}
\ncom{\Th}{\Theta}
\ncom{\e}{\eta}
\ncom{\eps}{\epsilon}
\ncom{\pcc}{\stackrel{P}{>}}
\ncom{\lp}{\stackrel{L_{p}}{>}}
\ncom{\dist}{{\rm\,dist}}
\ncom{\sspan}{{\rm\,span}}
\ncom{\re}{{\rm Re\,}}
\ncom{\im}{{\rm Im\,}}
\ncom{\sgn}{{\rm sgn\,}}
\ncom{\ba}{\begin{array}}
\ncom{\ea}{\end{array}}
\ncom{\hone}{\mbox{\hspace{1em}}}
\ncom{\htwo}{\mbox{\hspace{2em}}}
\ncom{\hthree}{\mbox{\hspace{3em}}}
\ncom{\hfour}{\mbox{\hspace{4em}}}
\ncom{\vone}{\vskip 2ex}
\ncom{\vtwo}{\vskip 4ex}
\ncom{\vonee}{\vskip 1.5ex}
\ncom{\vthree}{\vskip 6ex}
\ncom{\vfour}{\vspace*{8ex}}
\ncom{\norm}{\|\;\;\|}
\ncom{\integ}[4]{\int_{#1}^{#2}\,{#3}\,d{#4}}
\ncom{\vspan}[1]{{{\rm\,span}\{ #1 \}}}
\ncom{\dm}[1]{ {\displaystyle{#1} } }
\ncom{\ri}[1]{{#1} \index{#1}}
\newtheorem{theorem}{\bf Theorem}[section]
\newtheorem{remark}{\bf Remark}[section]
\newtheorem{proposition}{Proposition}[section]
\newtheorem{lemma}{Lemma}[section]
\newtheorem{definition}{Definition}[section]
\newtheoremstyle
    {remarkstyle}
    {}
    {11pt}
    {}
    {}
    {\bfseries}
    {:}
    {     }
    {\thmname{#1} \thmnumber{#2} }
\theoremstyle{remarkstyle}
\def\eps{\varepsilon}
\begin{document}
\title{Mixed Fractional Risk Process}
\author[Kuldeep Kumar Kataria]{K. K. Kataria}
\address{Kuldeep Kumar Kataria, Department of Mathematics,
	Indian Institute of Technology Bhilai, Raipur-492015, India.}
\email{kuldeepk@iitbhilai.ac.in}
\author[Mostafizar Khandakar]{M. Khandakar}
\address{Mostafizar Khandakar, Department of Mathematics,
	Indian Institute of Technology Bhilai, Raipur-492015, India.}
\email{mostafizark@iitbhilai.ac.in}
\date{January 26, 2020.}
\subjclass[2010]{Primary : 60G22; 91B30; Secondary: 60G46; 60G55. }
\keywords{risk process; mixed fractional Poisson process; long-range dependence property; short-range dependence property.}
\begin{abstract}
In this paper, we introduce a risk process, namely, the mixed fractional risk process (MFRP) in which the number of claims in the associated claim process are modelled using the mixed fractional Poisson process (MFPP). The covariance structure of the MFRP is studied and its long-range dependence property has been established. Also, we show that the increments of MFRP exhibit the short-range dependence property.  Another fractional risk process based on the MFPP is introduced which we call as the MFRP-II. It differs from the MFRP in terms of premium. Its ruin probabilities in the case of exponential and subexponential distributions of claim sizes are obtained.
\end{abstract}

\maketitle
\section{Introduction}
The classical collective risk Cram\'er-Lundberg model describes the surplus $R(t)$ of an insurance company till time $t$ as
\begin{equation}\label{9}
R(t)=u+ct-\sum _{i=1}^{N(t)}X_{i}, \quad t\ge0,
\end{equation}
where   $u>0$  is the initial capital and $c>0$ is the premium rate. The number of claims in the above risk process are modelled using homogeneous Poisson process $N(t)$ with intensity parameter $\lambda>0$ whereas its claim sizes  $X_{i}$'s  are  independent and identically distributed (iid) positive random variables with finite mean. The total claim till time  $t$ is given by a compound
Poisson process as $X_{1}+X_{2}+\dots +X_{N(t)}$. To avoid ruin with certainty the net profit condition, that is, $c\,\mathbb{E}\left (W_{i}\right)>\mathbb{E}\left(X_{i}\right)$ is assumed. Here, $W_{i}$ is  exponentially distributed  $i$th interarrival time. Constantinescu {\it et al.} (2018) provided three equivalent expressions for ruin probabilities in the case of gamma-distributed claim sizes in the risk process $\{R(t)\}_{t\ge0}$. The explicit expressions for ruin probabilities in risk models with distributions of claim sizes having rational Laplace transforms are obtained by Constantinescu {\it et al.} (2019).
 
Beghin and Macci (2013) introduced a fractional version of the above risk process as
\begin{equation*}
\bar{R}(t)=u+ct-\sum _{i=1}^{N^{\alpha}(t)}X_{i},  \quad t\ge0,
\end{equation*}
where $N^{\alpha}(t), 0<\alpha\le1$, is the time fractional Poisson process (TFPP). They obtained some asymptotic results for the ruin probabilities of $\bar{R}(t)$ using large deviations for the TFPP. Further, Biard and Saussereau (2014) derived some expressions for the ruin probabilities of $\bar{R}(t)$ in the case of light-tailed and heavy-tailed claim sizes. Recently, Kumar {\it et al.} (2019)   introduced and studied a fractional risk process whose surplus $R^{\alpha}(t)$ is given by
\begin{equation}\label{kumar}
R^{\alpha}(t)=u+\mu(1+\rho)\lambda Y_{\alpha}(t)-\sum_{i=1}^{N^{\alpha}(t)}X_{i}, \ \ t\ge0,
\end{equation}
where $Y_{\alpha}(t)$ is the inverse stable subordinator, $\rho\ge0$ is the safety loading parameter and $\mu$ is the mean of $X_{i}$. They established the long-range dependence (LRD) property of $\{R^{\alpha}(t)\}_{t\ge0}$ and discussed about the average capital required to recover a company after ruin. 

In this paper, we first introduce a compound version of the mixed fractional Poisson process (MFPP) defined as
\begin{equation}\label{qouuvf}
C^{\alpha_{1},\alpha_{2}}(t)\coloneqq\sum_{i=1}^{N^{\alpha_{1},\alpha_{2}}(t)}X_{i},
\end{equation}
where  $X_i$'s are positive integer valued iid random variables with finite mean which are independent of the MFPP $\{N^{\alpha_{1},\alpha_{2}}(t)\}_{t\ge0}$. It is shown that its state probabilities $q^{\alpha_{1},\alpha_{2}}_n(t)=\mathrm{Pr}\{C^{\alpha_{1},\alpha_{2}}(t)=n\}$ satisfy the following fractional differential equation:
\begin{align*}
C_1\partial_t^{\alpha_1}q^{\alpha_{1},\alpha_{2}}_0(t)+C_2\partial_t^{\alpha_2}q^{\alpha_{1},\alpha_{2}}_0(t)&=-\lambda q^{\alpha_{1},\alpha_{2}}_0(t),\\
C_1\partial_t^{\alpha_1}q^{\alpha_{1},\alpha_{2}}_n(t)+C_2\partial_t^{\alpha_2}q^{\alpha_{1},\alpha_{2}}_n(t)&=-\lambda q^{\alpha_{1},\alpha_{2}}_n(t)+\lambda\sum_{i=1}^{n}\mathrm{Pr}\{X_1=i\}q^{\alpha_{1},\alpha_{2}}_{n-i}(t),\ \ n\geq1,
\end{align*}
where $0<\alpha_{2}<\alpha_{1}<1$ and $C_{1}\geq0,C_{2}\geq0$ such that $C_{1}+C_{2}=1$.  Here, $\partial^{\alpha}_{t}$ denotes the Caputo fractional derivative defined as
\begin{equation}\label{plm1}
\partial^{\alpha}_{t}f(t)\coloneqq\begin{cases}
\dfrac{1}{\Gamma \left( 1-\alpha \right)}\displaystyle\int_{0}^{t}(t-s)^{-\alpha}f^{\prime}(s)\,\mathrm{d}s,\ \ 0<\alpha<1,\\
f^{\prime}(t), \ \ \alpha=1.
\end{cases}
\end{equation}
 By identifying the compound MFPP (\ref{qouuvf}) as claim process, we introduce a fractional risk process, namely, the mixed fractional risk process (MFRP) by considering the inverse mixed stable subordinator in the premium received by the insurance company till time $t$. The surplus of the MFRP is 
\begin{equation*}
R^{\alpha_{1},\alpha_{2}}(t)=u+\mu(1+\rho)\lambda Y_{\alpha_{1},\alpha_{2}}(t)-\sum_{i=1}^{N^{\alpha_{1},\alpha_{2}}(t)}X_{i}, \ \ t\ge0.
\end{equation*}
We obtain the covariance of MFRP in terms of the known covariance of the inverse mixed stable subordinator. Using its covariance structure we establish the LRD property of MFRP. Also, we show that the increments of MFRP exhibit the short-range dependence (SRD) property.

Another fractional risk process based on the MFPP is introduced whose surplus is given by 
\begin{equation*}
\bar{R}^{\alpha_{1},\alpha_{2}}(t)=u+ct-\sum_{i=1}^{N^{\alpha_{1},\alpha_{2}}(t)}X_{i}, \ \ t\ge0.
\end{equation*}
We call the process $\{\bar{R}^{\alpha_{1},\alpha_{2}}(t)\}_{t\ge0}$ as MFRP-II. Note that MFRP-II differs from MFRP with respect to premium. Its ruin probabilities are obtained in two different cases of the claim sizes.

The paper is organised as follows: In Section 2, we give some preliminary results on subordinator, inverse subordinator and the mixture of inverse subordinators. In Section 3, we first briefly discuss the MFPP and obtain its one dimensional distributions. Then, we define a compound version of the MFPP and obtain the governing fractional differential equations of its state probabilities.  We show that the MFPP and the introduced compound MFPP exhibits overdispersion. In Section 4, a risk process based on the mixed fractional Poissonian claims, namely, the MFRP is introduced. It is shown that the MFRP is a martingale with respect to a suitable filtration provided the safety loading factor $\rho=0$. We obtain the covariance of MFRP using which it is shown that this fractional risk process exhibits LRD property. Moreover, it is shown that the increments of MFRP exhibits SRD property. In Section 5, a second fractional risk process, namely, the MFRP-II is introduced, and the expressions for its ruin probabilities in the case of exponential and subexponential distributions of claim sizes are obtained.

\section{Preliminaries}
In this section, we present some known results which will be required later.
\subsection{Subordinator and inverse subordinator}
A subordinator $\{D(t)\}_{t\geq0}$ is a one-dimensional L\'evy process whose sample paths are non-decreasing. It is characterized by the following Laplace transform:
\begin{equation*}
\mathbb{E}\left(e^{-sD(t)}\right)=e^{-t\phi(s)},\ \ s\geq0,
\end{equation*}
where ${\phi(s)=\mu s+\displaystyle\int_{0}^{\infty}\left(1-e^{-s x}\right)\,\Pi(\mathrm{d} x)}$ is the L\'evy exponent. Here, $\mu \geq 0$  is the drift and  $\Pi$  is the L\'evy measure. For $\phi(s)=s^\alpha$, $0<\alpha<1$, it reduces to a driftless ($\mu=0$) subordinator known as the $\alpha$-stable subordinator denoted by $\{D_\alpha(t)\}_{t\geq0}$. 

The first-passage time of $\{D(t)\}_{t\geq0}$ is a  non-decreasing process $\{Y(t)\}_{t\geq 0}$ known as the inverse subordinator. It is defined as
\begin{equation*}
Y(t)=\inf \left\{s\geq0:D(s)>t\right\},\ \ t\geq 0.
\end{equation*}
 Similarly, the inverse $\alpha$-stable subordinator $\{Y_\alpha(t)\}_{t\geq0}$ is the first-passage time of $\{D_\alpha(t)\}_{t\geq0}$. For further details on subordinator and its inverse, we refer the reader to Applebaum (2004).

\subsection{Mixture of inverse subordinators}
The process mixed stable subordinator $\left\{D_{\alpha_{1}, \alpha_{2}}(t)\right\}_{t\geq0}$ is characterized by the Laplace transform given by
\begin{equation}\label{3}
\mathbb{E}\left(e^{-s D_{\alpha_{1}, \alpha_{2}}(t)}\right)=e^{-t\left(C_{1} s^{\alpha_{1}}+C_{2} s^{\alpha_{2}}\right)},\ \ s\geq0,
\end{equation}
where  $C_{1}+C_{2}=1$, $C_{1}\geq0$, $C_{2}\geq0$ and $0<\alpha_{2}<\alpha_{1}<1$. It is known that  
\begin{equation}\label{4}
D_{\alpha_{1}, \alpha_{2}}(t)\overset{d}{=}\left(C_{1}\right)^{\frac{1}{\alpha_{1}}} D_{\alpha_{1}}(t)+\left(C_{2}\right)^{\frac{1}{\alpha_{2}}} D_{\alpha_{2}}(t), \quad t \geq 0,
\end{equation}
where $D_{\alpha_{1}}$ and $D_{\alpha_{2}}$ are two independent stable subordinators such that $0<\alpha_2<\alpha_1<1$. Here, $\overset{d}{=}$ means equal in distribution. The inverse mixed stable subordinator $\left\{Y_{\alpha_{1}, \alpha_{2}}(t)\right\}_{t\geq0}$ is defined as follows: 
\begin{equation}\label{5}
Y_{\alpha_{1},\alpha_{2}}(t)=\inf \left\{s\geq0 :D_{\alpha_{1},\alpha_{2}}(s)>t\right\}, \quad t\geq 0.
\end{equation}
For $C_{2}=0$, it reduces to inverse stable subordinator. A similar result holds for $C_{1}=0$.

Leonenko {\it et al.} (2014) obtained the following expression for the expected value of the inverse mixed stable subordinator $U_{{\alpha_{1},\alpha_{2}}}(t)=\mathbb{E}\left(Y_{\alpha_{1},\alpha_{2}}(t)\right)$ :
\begin{equation}\label{7}
U_{{\alpha_{1},\alpha_{2}}}(t)=\frac{t^{\alpha_{1}}}{C_{1}}  E_{\alpha_{1}-\alpha_{2}, \alpha_{1}+1}\left(-C_{2}t^{\alpha_{1}-\alpha_{2}}/C_{1}\right),
\end{equation}
where  $E_{\alpha_{1}-\alpha_{2}, \alpha_{1}+1}(.)$  is the two-parameter Mittag-Leffler function defined as (see Mathai and Haubold (2008)) 
\begin{equation*}
E_{\alpha, \beta}(t)\coloneqq\sum_{k=0}^{\infty} \frac{t^{k}}{\Gamma(k\alpha+\beta)},\ \ \alpha>0,\ \beta>0.
\end{equation*}
It is generalized to three-parameter Mittag-Leffler function which is defined as 
\begin{equation*}
E_{\alpha, \beta}^{\gamma}(t)\coloneqq\sum_{k=0}^{\infty} \frac{\gamma(\gamma+1)\ldots(\gamma+k-1)t^{k}}{k!\Gamma(k\alpha+\beta)},\ \ \alpha>0,\ \beta>0,\ \gamma>0.
\end{equation*}
When $\gamma=1$, it reduces to two-parameter Mittag-Leffler function. For $\lambda>0$ and $\beta\neq\alpha\gamma$ the following asymptotic result holds (see Beghin (2012), Eq. (2.44)):
\begin{equation*}
E_{\alpha, \beta}^{\gamma}\left(-\lambda t^{\alpha}\right)=\frac{\lambda^{-\gamma}t^{-\alpha \gamma}}{\Gamma(\beta-\alpha\gamma)}+o(t^{-\alpha \gamma}),\ \ t\to\infty.
\end{equation*}
It follows from the above equation  that
\begin{equation}\label{3m}
E_{\alpha, \beta}^{\gamma}\left(-\lambda t^{\alpha}\right)\sim\frac{\lambda^{-\gamma}t^{-\alpha \gamma}}{\Gamma(\beta-\alpha\gamma)},\ \ t\to\infty.
\end{equation}
Veillette and Taqqu (2010) obtained the following asymptotic result for $U_{{\alpha_{1},\alpha_{2}}}(t)$:
\begin{equation}\label{Tau}
U_{{\alpha_{1},\alpha_{2}}}(t) \sim \begin{cases}
\dfrac{t^{\alpha_{1}}}{C_{1} \Gamma\left(\alpha_{1}+1\right)}, & {t \rightarrow 0},\\ \dfrac{t^{\alpha_{2}}}{C_{2} \Gamma\left(\alpha_{2}+1\right)}, & {t \rightarrow \infty}.
\end{cases}
\end{equation}
For fixed $s\ge0$ and large $t$, the variance  and covariance of the inverse mixed stable subordinator have the following limiting behaviour (see Kataria and Khandakar (2019)):
 \begin{align}
 \operatorname{Var}(Y_{\alpha_{1},\alpha_{2}}(t))&\sim\frac{t^{2\alpha_{2}}}{C_{2}^{2}}\left(\frac{2}{\Gamma(2\alpha_{2}+1)}-\frac{1}{\left(\Gamma(\alpha_{2}+1)\right)^{2}}\right),\label{3.5}\\
\operatorname{Cov}\left(Y_{\alpha_{1},\alpha_{2}}(s), Y_{\alpha_{1},\alpha_{2}}(t)\right)&\sim\frac{s^{2\alpha_{1}}}{C_{1}^{2}}E_{\alpha_{1}-\alpha_{2}, 2\alpha_{1}+1}^{2}\left(-C_{2}s^{\alpha_{1}-\alpha_{2}}/C_{1}\right).\label{co}
\end{align}
\subsection{The LRD and SRD properties}
The long-range dependence and short-range dependence properties for a non-stationary stochastic process $\{X(t)\}_{t\geq0}$  are defined as follows (see D'Ovidio and Nane (2014), Kumar {\it et al.} (2019)):
\begin{definition}
Let $s>0$ be fixed and $t>s$. If the correlation function of a stochastic process $\{X(t)\}_{t\ge0}$ has the following asymptotic behaviour as $t\rightarrow\infty$: 
\begin{equation}\label{lrd}
\operatorname{Corr}(X(s),X(t))\sim d(s)t^{-\nu},
\end{equation}
where the constant $d(s)$ is  independent of $t$, then $\{X(t)\}_{t\ge0}$ is said to exhibits the LRD property if $\nu\in(0,1)$ and the SRD property if $\nu\in(1,2)$.
\end{definition}
\section{Compound Mixed Fractional Poisson Process}
Beghin (2012) introduced and studied a fractional version of the Poisson process, namely, the MFPP which is based on the inverse mixed stable subordinator. Here, we introduce a compound version of the MFPP that will be used in the next section to construct an insurance risk model. For completeness, we first give brief details on the MFPP.

Consider a time changed Poisson process, namely, the  mixed fractional non-homogeneous Poisson process (MFNPP) defined as (see Aletti {\it et al.} (2018)) 
\begin{equation*}
N_{\Lambda}^{\alpha_{1}, \alpha_{2}}=\left\{N_{\Lambda}^{\alpha_{1}, \alpha_{2}}(t)\right\}_{t \geq 0}=\left\{N\left(\Lambda\left(Y_{\alpha_{1}, \alpha_{2}}(t)\right)\right)\right\}_{t \geq 0}.
\end{equation*}
Here, $0<\alpha_{2}<\alpha_{1}<1$ and $\{N(t)\}_{t\geq0}$ is the Poisson process with intensity 1 which is independent of the inverse mixed stable subordinator $\{Y_{\alpha_{1},\alpha_{2}}(t)\}_{t\ge0}$. Also, $\Lambda:\mathbb{R}^{+}\to\mathbb{R}^{+}$ is a right-continuous non-decreasing function with $\Lambda(0)=0, \ \Lambda(\infty)=\infty$  and  $\Lambda(t)-\Lambda(t-) \leq1$. 

For $\Lambda(t)=\lambda t$, $\lambda>0$, the MFNPP reduces to MFPP. Thus,
\begin{equation*}
N^{\alpha_{1}, \alpha_{2}}=\left\{N^{\alpha_{1}, \alpha_{2}}(t)\right\}_{t \geq 0}=\left\{N\left(Y_{\alpha_{1}, \alpha_{2}}(t)\right)\right\}_{t \geq 0},
\end{equation*}
where  $\{N(t)\}_{t\geq0}$ is the Poisson process with intensity $\lambda$.

Beghin (2012) showed that the MFPP is a renewal process, and obtained the Laplace transform of its interarrival time $W$ as
\begin{equation}\label{laplace}
\tilde{f}_{W}^{\alpha_{1},\alpha_{2}}(s)=\dfrac{\lambda}{ C_{1}s^{\alpha_{1}}+C_{2}s^{\alpha_{2}} +\lambda},\quad s>0,
\end{equation}
where $C_{1}\geq0,C_{2}\geq0$ such that $C_{1}+C_{2}=1$.
On using the inverse Laplace transform, the density of the interarrival time is given by
\begin{equation}\label{density}
f_{W}^{\alpha_{1},\alpha_{2}}(t)=\frac{\lambda t^{\alpha_{1}-1}}{C_{1}}\sum_{k=0}^{\infty}\left(-\frac{C_{2}t^{\alpha_{1}-\alpha_{2}}}{C_{1}} \right)^{k}E_{\alpha_{1},\alpha_{1}+(\alpha_{1}-\alpha_{2})k}^{k+1}\left(-\frac{\lambda t^{\alpha_{1}}}{C_{1}}\right).
\end{equation}
The Laplace transform  of the state probabilities $p_{n}^{\alpha_{1},\alpha_{2}}(t)=\mathrm{Pr}\{N^{\alpha_{1},\alpha_{2}}(t)=n\}$  of the MFPP is given by (see Beghin (2012), Eq. (2.7))
\begin{equation}\label{invl}
\tilde{p}_{n}^{\alpha_{1},\alpha_{2}}(s)=\dfrac{\lambda^{n}C_{1}s^{\alpha_{1}-1}}{(C_{1}s^{\alpha_{1}}+C_{2}s^{\alpha_{2}}+\lambda)^{n+1}}+\dfrac{\lambda^{n}C_{2}s^{\alpha_{2}-1}}{(C_{1}s^{\alpha_{1}}+C_{2}s^{\alpha_{2}}+\lambda)^{n+1}}, \quad n\ge0.
\end{equation}
The state probabilities of MFPP satisfy the following system of fractional differential equations (see Beghin (2012), Eq 2.6):
\begin{equation}\label{gov0}
C_{1}\partial^{\alpha_{1}}_{t}p_{n}^{\alpha_{1},\alpha_{2}}(t)+C_{2}\partial^{\alpha_{2}}_{t}p_{n}^{\alpha_{1},\alpha_{2}}(t)=-\lambda\left(p_{n}^{\alpha_{1},\alpha_{2}}(t)-p_{n-1}^{\alpha_{1},\alpha_{2}}(t)\right),\ n\ge0,
\end{equation}
subject to the initial conditions 
\begin{equation*}
p_{0}^{\alpha_{1},\alpha_{2}}(0)=1,\ p_{n}^{\alpha_{1},\alpha_{2}}(0)=0, \ n\ge1,
\end{equation*}
with $p_{-1}^{\alpha_{1},\alpha_{2}}(t)=0$. Here, $\partial^{\alpha}_{t}$ denotes the Caputo fractional derivative defined in (\ref{plm1}). The inversion of the above Laplace transform (\ref{invl}) is tedious. For $n=0$, the explicit form of $p_{0}^{\alpha_{1},\alpha_{2}}(t)$  in terms of three-parameter Mittag-Leffler function is given by (see Beghin (2012), Eq. (2.13))
\begin{align}\label{qp09iijh1}
p_{0}^{\alpha_{1},\alpha_{2}
}(t)&=\sum_{k=0}^{\infty}\left(-\frac{C_{2}t^{\alpha_{1}-\alpha_{2}}}{C_{1}} \right)^{k}E_{\alpha_{1},(\alpha_{1}-\alpha_{2})k+1}^{k+1}\left(-\frac{\lambda t^{\alpha_{1}}}{C_{1}}\right)\nonumber\\
&\ \ \ \ \ \ \ -\sum_{k=0}^{\infty}\left(-\frac{C_{2}t^{\alpha_{1}-\alpha_{2}}}{C_{1}} \right)^{k+1}E_{\alpha_{1},(\alpha_{1}-\alpha_{2})(k+1)+1}^{k+1}\left(-\frac{\lambda t^{\alpha_{1}}}{C_{1}}\right).
\end{align}
Beghin (2012) stated that the explicit expressions for the state probabilities of MFPP can be obtained in terms of convolutions of known distributions. Next, we give the exact form of the state probabilities of MFPP in terms of convolutions of two given functions.
\begin{proposition}
The state probabilities  $p_{n}^{\alpha_{1},\alpha_{2}}(t),\ n\ge0$ of the MFPP is given by
\begin{equation}\label{keyrrd}
p_{n}^{\alpha_{1},\alpha_{2}}(t)=\dfrac{\lambda^n}{C_{1}^{n+1}}\left(C_1f^{*(n+1)}(t)+C_2g^{*(n+1)}(t)\right),
\end{equation}
where
\begin{align*}
f(t)&=t^{\frac{n(\alpha_{1}-1)}{n+1}}\sum_{k=0}^{\infty}\left(-\frac{C_{2}t^{\alpha_{1}-\alpha_{2}}}{C_{1}} \right)^{k}E_{\alpha_{1},(\alpha_{1}-\alpha_{2})k+\frac{n\alpha_{1}+1}{n+1}}^{k+1}\left(-\frac{\lambda t^{\alpha_{1}}}{C_{1}}\right),\\
g(t)&=t^{\frac{n(\alpha_{1}-1)+\alpha_{1}-\alpha_{2}}{n+1}}\sum_{k=0}^{\infty}\left(-\frac{C_{2}t^{\alpha_{1}-\alpha_{2}}}{C_{1}} \right)^{k}E_{\alpha_{1},(\alpha_{1}-\alpha_{2})k+\frac{n\alpha_{1}+\alpha_{1}-\alpha_{2}+1}{n+1}}^{k+1}\left(-\frac{\lambda t^{\alpha_{1}}}{C_{1}}\right),
\end{align*}
and $f^{*(n+1)}$ and $g^{*(n+1)}$ denote $(n+1)$-fold convolution of $f$ and  $g$ respectively.
\end{proposition}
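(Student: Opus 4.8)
The plan is to verify the identity by showing that the Laplace transform of the right-hand side of (\ref{keyrrd}) coincides with the known transform $\tilde{p}_{n}^{\alpha_{1},\alpha_{2}}(s)$ in (\ref{invl}); uniqueness of the inverse Laplace transform then finishes the argument. The decisive simplification is the convolution theorem: since an $(n+1)$-fold convolution transforms into an $(n+1)$-th power, we have $\widetilde{f^{*(n+1)}}(s)=\big(\tilde{f}(s)\big)^{n+1}$ and likewise for $g$. Hence it suffices to compute the single Laplace transforms $\tilde{f}(s)$ and $\tilde{g}(s)$ and to check that
\[
\frac{\lambda^{n}}{C_{1}^{n+1}}\Big(C_{1}\big(\tilde{f}(s)\big)^{n+1}+C_{2}\big(\tilde{g}(s)\big)^{n+1}\Big)=\tilde{p}_{n}^{\alpha_{1},\alpha_{2}}(s).
\]

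First I would compute $\tilde{f}(s)$. Writing out $f$, each summand carries a factor $t^{\,n(\alpha_{1}-1)/(n+1)+(\alpha_{1}-\alpha_{2})k}$ multiplying $E_{\alpha_{1},\beta_{k}}^{k+1}(-\lambda t^{\alpha_{1}}/C_{1})$ with $\beta_{k}=(\alpha_{1}-\alpha_{2})k+\frac{n\alpha_{1}+1}{n+1}$. A direct check shows that the exponent of $t$ equals exactly $\beta_{k}-1$, so the standard Laplace transform of the three-parameter Mittag-Leffler function,
\[
\int_{0}^{\infty}e^{-st}\,t^{\beta-1}E_{\alpha,\beta}^{\gamma}\!\left(-\omega t^{\alpha}\right)\mathrm{d}t=\frac{s^{\alpha\gamma-\beta}}{\left(s^{\alpha}+\omega\right)^{\gamma}},\qquad \left|\omega/s^{\alpha}\right|<1,
\]
applies termwise with $\alpha=\alpha_{1}$, $\gamma=k+1$, $\omega=\lambda/C_{1}$. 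Using $\alpha_{1}(k+1)-\beta_{k}=\alpha_{2}k+\frac{\alpha_{1}-1}{n+1}$, I obtain (after interchanging sum and integral) $\tilde{f}(s)=\sum_{k\ge0}(-C_{2}/C_{1})^{k}\,s^{\alpha_{2}k+(\alpha_{1}-1)/(n+1)}\big(s^{\alpha_{1}}+\lambda/C_{1}\big)^{-(k+1)}$.

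The remaining sum is geometric with ratio $-C_{2}s^{\alpha_{2}}/\big(C_{1}s^{\alpha_{1}}+\lambda\big)$, and summing it collapses the expression to
\[
\tilde{f}(s)=\frac{C_{1}\,s^{(\alpha_{1}-1)/(n+1)}}{C_{1}s^{\alpha_{1}}+C_{2}s^{\alpha_{2}}+\lambda}.
\]
The identical computation for $g$ (only the outer power of $t$ and the $\beta$-index are shifted by $\alpha_{1}-\alpha_{2}$) yields the same denominator with numerator exponent $(\alpha_{2}-1)/(n+1)$, i.e. $\tilde{g}(s)=C_{1}s^{(\alpha_{2}-1)/(n+1)}/\big(C_{1}s^{\alpha_{1}}+C_{2}s^{\alpha_{2}}+\lambda\big)$. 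Raising these to the $(n+1)$-th power, the factors $s^{(\alpha_{1}-1)/(n+1)}$ and $s^{(\alpha_{2}-1)/(n+1)}$ become $s^{\alpha_{1}-1}$ and $s^{\alpha_{2}-1}$, and substituting back into the displayed combination reproduces the two summands of (\ref{invl}) exactly, with the powers of $C_{1}$ cancelling correctly.

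I expect the only genuinely delicate points to be analytic rather than algebraic: justifying the termwise transform (interchange of the infinite sum with the integral) and the convergence of the geometric series, both of which require $\big|C_{2}s^{\alpha_{2}}/(C_{1}s^{\alpha_{1}}+\lambda)\big|<1$ and hence hold for $s$ sufficiently large, the resulting identity of analytic functions then extending to the common domain. Once $\tilde{f}$ and $\tilde{g}$ are identified in closed form, matching with (\ref{invl}) is pure bookkeeping, so the Mittag-Leffler transform step together with the geometric summation is the heart of the proof.
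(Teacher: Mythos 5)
Your proposal is correct and is essentially the paper's argument run in the opposite direction: the paper inverts \eqref{invl} by writing it as a sum of two $(n+1)$-th powers and citing the Haubold--Mathai--Saxena inversion formula for $s^{\rho-1}/(s^{\alpha}+as^{\beta}+b)$, whereas you verify the claimed expression by taking the forward transform of $f$ and $g$, which amounts to re-deriving that same formula from the Prabhakar Laplace transform plus a geometric summation. The key identifications --- the convolution theorem turning $f^{*(n+1)}$ into $\tilde f^{\,n+1}$ and $\tilde f(s)=C_{1}s^{(\alpha_{1}-1)/(n+1)}/(C_{1}s^{\alpha_{1}}+C_{2}s^{\alpha_{2}}+\lambda)$, $\tilde g(s)=C_{1}s^{(\alpha_{2}-1)/(n+1)}/(C_{1}s^{\alpha_{1}}+C_{2}s^{\alpha_{2}}+\lambda)$ --- are exactly those in the paper, and your bookkeeping checks out.
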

\begin{proof}
On taking the inverse Laplace transform in (\ref{invl}), we get
\begin{equation*}
p_{n}^{\alpha_{1},\alpha_{2}}(t)=\frac{\lambda^n}{C_1^n}\mathscr{L}^{-1}\left(\left(\dfrac{s^{\frac{\alpha_{1}-1}{n+1}}}{s^{\alpha_{1}}+\frac{C_{2}}{C_{1}}s^{\alpha_{2}}+\frac{\lambda}{C_{1}}}\right)^{n+1};t\right) +\frac{\lambda^nC_2}{C_1^{n+1}}\mathscr{L}^{-1}\left(\left(\dfrac{s^{\frac{\alpha_{2}-1}{n+1}}}{s^{\alpha_{1}}+\frac{C_{2}}{C_{1}}s^{\alpha_{2}}+\frac{\lambda}{C_{1}}}\right)^{n+1};t\right).       
\end{equation*}
On using the following result due to  Haubold \textit{et al.} (2011), Eq. (17.6):
\begin{equation*}
\mathscr{L}^{-1}\left(\dfrac{s^{\rho-1}}{s^{\alpha}+as^{\beta}+b};t\right)=t^{\alpha-\rho}\sum_{k=0}^{\infty}(-a)^{k}t^{(\alpha-\beta)k}E_{\alpha,\alpha+(\alpha-\beta)k-\rho+1}^{k+1}\left(-bt^{\alpha}\right),
\end{equation*}
where $\alpha>0, \beta>0, \rho>0$ and $|as^{\beta}/(s^{\alpha}+b)|<1$,
we get
\begin{equation*}
p_{n}^{\alpha_{1},\alpha_{2}}(t)=\left(\dfrac{\lambda}{C_{1}}\right)^{n}f^{*(n+1)}(t)+\left(\dfrac{\lambda}{C_{1}}\right)^{n}\dfrac{C_{2}}{C_{1}}g^{*(n+1)}(t).
\end{equation*}
This completes the proof.
\end{proof}
On putting $n=0$ in (\ref{keyrrd}) we obtain (\ref{qp09iijh1}). The probability generating function (pgf) of MFPP is given by Beghin (2012) as 
\begin{align}\label{qmmmqa1}
\mathbb{E}(z^{N^{\alpha_{1}, \alpha_{2}}(t)})&=\sum_{k=0}^{\infty}\left(-\frac{C_{2}t^{\alpha_{1}-\alpha_{2}}}{C_{1}} \right)^{k}E_{\alpha_{1},(\alpha_{1}-\alpha_{2})k+1}^{k+1}\left(-\frac{\lambda (1-z)t^{\alpha_{1}}}{C_{1}}\right)\nonumber\\
&\ \ \  +\sum_{k=0}^{\infty}\left(-\frac{C_{2}t^{\alpha_{1}-\alpha_{2}}}{C_{1}}\right)^{k+1}E_{\alpha_{1},(\alpha_{1}-\alpha_{2})(k+1)+1}^{k+1}\left(-\frac{\lambda (1-z)t^{\alpha_{1}}}{C_{1}}\right).
\end{align}
\subsection{The Compound MFPP}
We define a compound version of the MFPP denoted by $\{C^{\alpha_{1},\alpha_{2}}(t)\}_{t\ge0}$ as
\begin{equation}\label{qmlk11}
	C^{\alpha_{1},\alpha_{2}}(t)\coloneqq\sum_{i=1}^{N^{\alpha_{1},\alpha_{2}}(t)}X_{i},
\end{equation}
where $\{X_i\}_{i\geq1}$ is a sequence of positive integer valued iid random variables with finite mean which is independent of the MFPP $\{N^{\alpha_{1},\alpha_{2}}(t)\}_{t\ge0}$. Thus,
\begin{equation*}
C^{\alpha_{1},\alpha_{2}}(t)=\sum_{i=1}^{N(Y_{\alpha_1,\alpha_2}(t))}X_{i},
\end{equation*}
where the inverse mixed stable subordinator $\{Y_{\alpha_{1},\alpha_{2}}(t)\}_{t\ge0}$ is independent of the Poisson process $\{N(t)\}_{t\ge0}$ with intensity parameter $\lambda$. 

Next, we show that the MFPP and compound MFPP exhibits overdispersion. A stochastic process $\{X(t)\}_{t>0}$ is said to have overdispersion if  $\operatorname{Var}(X(t))-\mathbb{E}(X(t))>0$ for all $t>0$. Beghin and Macci (2014) showed that the TFPP and compound TFPP are overdispersed processes. The mean and variance of MFPP are given by (see Aletti {\it et al.} (2018))
\begin{align}
\mathbb{E}\left(N^{\alpha_{1}, \alpha_{2}}(t)\right)
&=\lambda U_{{\alpha_{1},\alpha_{2}}}(t),\label{E}\\ \operatorname{Var}\left(N^{\alpha_{1}, \alpha_{2}}(t)\right)
&=\lambda U_{\alpha_{1},\alpha_{2}}(t)+\lambda^{2}\operatorname {Var} \left(Y_{\alpha_{1}, \alpha_{2}}(t)\right),\label{V}
\end{align}
where $U_{{\alpha_{1},\alpha_{2}}}(t)$ is given in (\ref{7}). Therefore,
\begin{equation*}
\operatorname{Var}\left(N^{\alpha_{1}, \alpha_{2}}(t)\right)-\mathbb{E}\left(N^{\alpha_{1}, \alpha_{2}}(t)\right)=\lambda^{2}\operatorname {Var} \left(Y_{\alpha_{1}, \alpha_{2}}(t)\right)>0,
\end{equation*}
for all $t>0$. This shows that the MFPP exhibits overdispersion.

Using a well known result (see Mikosh (2009)), the mean and variance of $\{C^{\alpha_{1},\alpha_{2}}(t)\}_{t\ge0}$ are obtained as
\begin{equation*}
	\mathbb{E}\left(C^{\alpha_{1},\alpha_{2}}(t)\right)=\mathbb{E}\left(N^{\alpha_{1}, \alpha_{2}}(t)\right)\mathbb{E}(X_{1})=\lambda U_{{\alpha_{1},\alpha_{2}}}(t)\mathbb{E}(X_{1}),
\end{equation*}	
and 
\begin{align*}
	\operatorname{Var}\left(C^{\alpha_{1},\alpha_{2}}(t)\right)&=\mathbb{E}\left(N^{\alpha_{1}, \alpha_{2}}(t)\right)\operatorname{Var}(X_{1})+\operatorname{Var}\left(N^{\alpha_{1}, \alpha_{2}}(t)\right)(\mathbb{E}(X_{1}))^{2}\\
	&=\lambda U_{{\alpha_{1},\alpha_{2}}}(t)\operatorname{Var}(X_{1})+\left(\lambda U_{\alpha_{1},\alpha_{2}}(t)+\lambda^{2}\operatorname {Var} \left(Y_{\alpha_{1}, \alpha_{2}}(t)\right)\right)(\mathbb{E}(X_{1}))^{2}\\
	&=\lambda U_{{\alpha_{1},\alpha_{2}}}(t)\mathbb{E}(X_{1}^2)+\lambda^{2}\operatorname {Var} \left(Y_{\alpha_{1}, \alpha_{2}}(t)\right)(\mathbb{E}(X_{1}))^{2}.
\end{align*}
Since $\mathbb{E}(X_{1}^{2})-\mathbb{E}(X_{1})\ge 0$ as $\mathrm{Pr}\{X_{1}\ge 1\}=1$, we have
\begin{equation*}
\operatorname{Var}\left(C^{\alpha_{1},\alpha_{2}}(t)\right)-\mathbb{E}\left(C^{\alpha_{1},\alpha_{2}}(t)\right)>0.
\end{equation*}
This establishes that compound MFPP exhibits overdispersion.

The following result gives the governing fractional differential equations of the state probabilities of compound MFPP. 
\begin{proposition}
The state probabilities $q^{\alpha_{1},\alpha_{2}}_n(t)=\mathrm{Pr}\{C^{\alpha_{1},\alpha_{2}}(t)=n\}$ of compound MFPP satisfy the following fractional differential equation
\begin{align*}
C_1\partial_t^{\alpha_1}q^{\alpha_{1},\alpha_{2}}_0(t)+C_2\partial_t^{\alpha_2}q^{\alpha_{1},\alpha_{2}}_0(t)&=-\lambda q^{\alpha_{1},\alpha_{2}}_0(t),\\
C_1\partial_t^{\alpha_1}q^{\alpha_{1},\alpha_{2}}_n(t)+C_2\partial_t^{\alpha_2}q^{\alpha_{1},\alpha_{2}}_n(t)&=-\lambda q^{\alpha_{1},\alpha_{2}}_n(t)+\lambda\sum_{i=1}^{n}\mathrm{Pr}\{X_1=i\}q^{\alpha_{1},\alpha_{2}}_{n-i}(t),\ \ n\geq1,
	\end{align*}
where $C_{1}\geq0,C_{2}\geq0$ such that $C_{1}+C_{2}=1$.
\end{proposition}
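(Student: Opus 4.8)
The plan is to reduce the statement to the already-established governing equations (\ref{gov0}) of the MFPP by conditioning on the number of claims. Writing $f_i=\mathrm{Pr}\{X_1=i\}$ and letting $g_n^{(k)}=\mathrm{Pr}\{X_1+\dots+X_k=n\}$ denote the $k$-fold convolution of the jump law (with $g_0^{(0)}=1$ and $g_n^{(0)}=0$ for $n\ge1$), the independence of $\{X_i\}_{i\ge1}$ from the MFPP gives
\begin{equation*}
q_n^{\alpha_{1},\alpha_{2}}(t)=\sum_{k=0}^{\infty}g_n^{(k)}\,p_k^{\alpha_{1},\alpha_{2}}(t).
\end{equation*}
The crucial observation is that, since each $X_i$ is a positive integer, one has $X_1+\dots+X_k\ge k$, so $g_n^{(k)}=0$ whenever $k>n$. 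Hence the series is in fact the \emph{finite} sum $\sum_{k=0}^{n}g_n^{(k)}p_k^{\alpha_{1},\alpha_{2}}(t)$, and no interchange-of-limit difficulty arises when applying the linear operator $C_1\partial_t^{\alpha_1}+C_2\partial_t^{\alpha_2}$ term by term. Moreover $C^{\alpha_{1},\alpha_{2}}(t)=0$ exactly when $N^{\alpha_{1},\alpha_{2}}(t)=0$, so $q_0^{\alpha_{1},\alpha_{2}}(t)=p_0^{\alpha_{1},\alpha_{2}}(t)$ and the $n=0$ equation follows at once from the $n=0$ case of (\ref{gov0}), recalling that $p_{-1}^{\alpha_{1},\alpha_{2}}\equiv0$.

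For $n\ge1$ I would apply $C_1\partial_t^{\alpha_1}+C_2\partial_t^{\alpha_2}$ to the finite sum and substitute (\ref{gov0}) for each $p_k^{\alpha_{1},\alpha_{2}}(t)$, obtaining
\begin{equation*}
C_1\partial_t^{\alpha_1}q_n^{\alpha_{1},\alpha_{2}}(t)+C_2\partial_t^{\alpha_2}q_n^{\alpha_{1},\alpha_{2}}(t)=-\lambda\sum_{k=1}^{n}g_n^{(k)}p_k^{\alpha_{1},\alpha_{2}}(t)+\lambda\sum_{k=1}^{n}g_n^{(k)}p_{k-1}^{\alpha_{1},\alpha_{2}}(t).
\end{equation*}
The first sum on the right is precisely $-\lambda q_n^{\alpha_{1},\alpha_{2}}(t)$. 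For the second sum I would reindex $j=k-1$ and use the one-step convolution recursion $g_n^{(j+1)}=\sum_{i=1}^{n}f_i\,g_{n-i}^{(j)}$, obtained by conditioning on the last increment $X_{j+1}$. Swapping the two finite sums and recognising that $\sum_{j}g_{n-i}^{(j)}p_j^{\alpha_{1},\alpha_{2}}(t)=q_{n-i}^{\alpha_{1},\alpha_{2}}(t)$ (valid because $g_{n-i}^{(j)}$ vanishes for $j>n-i\le n-1$) then collapses the second term to $\lambda\sum_{i=1}^{n}f_i\,q_{n-i}^{\alpha_{1},\alpha_{2}}(t)$, which is exactly the asserted right-hand side.

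Since every sum is finite once $n$ is fixed, there is essentially no analytic obstacle: the only point needing justification is that the Caputo operator (\ref{plm1}) commutes with a finite linear combination, which is automatic from its linearity. The one genuinely substantive step is the reindexing bookkeeping, i.e.\ checking that the shifted, reweighted combination of the $p_k^{\alpha_{1},\alpha_{2}}$ reassembles into the $q_{n-i}^{\alpha_{1},\alpha_{2}}$; this is where the convolution identity $g_n^{(j+1)}=\sum_{i=1}^n f_i\,g_{n-i}^{(j)}$ does all the work. A transparent cross-check proceeds via probability generating functions: from (\ref{gov0}) one derives $C_1\partial_t^{\alpha_1}P+C_2\partial_t^{\alpha_2}P=-\lambda(1-z)P$ for $P(z,t)=\mathbb{E}(z^{N^{\alpha_{1},\alpha_{2}}(t)})$, then uses $\mathbb{E}(z^{C^{\alpha_{1},\alpha_{2}}(t)})=P(\mathbb{E}(z^{X_1}),t)$ to substitute $z\mapsto\mathbb{E}(z^{X_1})$ and reads off the coefficient of $z^n$; I would keep the conditioning argument as the main line and record the generating-function computation as a verification.
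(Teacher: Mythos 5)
Your argument is correct and follows essentially the same route as the paper's proof: both condition on the number of claims to write $q_n^{\alpha_1,\alpha_2}(t)$ as a finite sum $\sum_{k} g_n^{(k)} p_k^{\alpha_1,\alpha_2}(t)$ (finite because the $X_i$ are positive integers), apply the linear operator $C_1\partial_t^{\alpha_1}+C_2\partial_t^{\alpha_2}$ term by term, substitute the MFPP equations (\ref{gov0}), and use the one-step convolution identity $g_n^{(j+1)}=\sum_{i=1}^n f_i\,g_{n-i}^{(j)}$ to reassemble the shifted sum into $\lambda\sum_{i=1}^n f_i\,q_{n-i}^{\alpha_1,\alpha_2}(t)$. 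The only additions beyond the paper are cosmetic (notation and the optional generating-function cross-check).
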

\begin{proof}
Let us denote $r_{i}=\mathrm{Pr}\{X_{1}=i\}$ for all $i\ge 1$ and $r_{n}(k)=\mathrm{Pr}\{X_{1}+X_{2}+\cdots +X_{k}=n\}$. Note that $r_{i}=\mathrm{Pr}\{X_{k}=i\}$ for all $k\geq1$. Thus,
	\begin{equation}\label{pmf}
	q^{\alpha_{1},\alpha_{2}}_n(t)=\begin{cases}
	p_{0}^{\alpha_{1},\alpha_{2}}(t)&\mathrm{if}\  n=0,\\
	\displaystyle\sum_{k=1}^{n}r_{n}(k)p_{k}^{\alpha_{1},\alpha_{2}}(t)& \mathrm{if}\  n\ge1.
	\end{cases}
	\end{equation}
From the above equation, we have
\begin{align*}
C_1\partial_t^{\alpha_1}q^{\alpha_{1},\alpha_{2}}_0(t)+C_2\partial_t^{\alpha_2}q^{\alpha_{1},\alpha_{2}}_0(t)&=C_1\partial_t^{\alpha_1}p^{\alpha_{1},\alpha_{2}}_0(t)+C_2\partial_t^{\alpha_2}p^{\alpha_{1},\alpha_{2}}_0(t)\\
&=-\lambda p^{\alpha_{1},\alpha_{2}}_0(t),\ \ \mathrm{(using\ (\ref{gov0}))}\\
&=-\lambda q^{\alpha_{1},\alpha_{2}}_0(t).
\end{align*}
Thus, the result holds true for $n=0$. For $n\geq1$, we get
\begin{align*}
C_{1}\partial^{\alpha_{1}}_{t}q_{n}^{\alpha_{1},\alpha_{2}}(t)&=\displaystyle\sum_{k=1}^{n}r_{n}(k)C_{1}\partial^{\alpha_{1}}_{t}p_{k}^{\alpha_{1},\alpha_{2}}(t),\\
C_{2}\partial^{\alpha_{2}}_{t}q_{n}^{\alpha_{1},\alpha_{2}}(t)&=\displaystyle\sum_{k=1}^{n}r_{n}(k)C_{2}\partial^{\alpha_{2}}_{t}p_{k}^{\alpha_{1},\alpha_{2}}(t).
\end{align*}	
Adding the above equations and using (\ref{gov0}), we get
\begin{align}\label{+}
C_{1}\partial^{\alpha_{1}}_{t}q_{n}^{\alpha_{1},\alpha_{2}}(t)+C_{2}\partial^{\alpha_{2}}_{t}q_{n}^{\alpha_{1},\alpha_{2}}(t)&=-\lambda\sum_{k=1}^{n}r_{n}(k)p_{k}^{\alpha_{1},\alpha_{2}}(t)+\lambda\sum_{k=1}^{n}r_{n}(k)p_{k-1}^{\alpha_{1},\alpha_{2}}(t)\nonumber\\
&=-\lambda q^{\alpha_{1},\alpha_{2}}_n(t)+\lambda\sum_{k=1}^{n}\left(\sum_{i=1}^{n}r_{n-i}(k-1)r_{i}\right)p_{k-1}^{\alpha_{1},\alpha_{2}}(t)\nonumber\\
&=-\lambda q^{\alpha_{1},\alpha_{2}}_n(t)+\lambda\sum_{i=1}^{n}r_{i}\sum_{k=1}^{n}r_{n-i}(k-1)p_{k-1}^{\alpha_{1},\alpha_{2}}(t).
\end{align}
Note that $r_{k}(0)=0$, $r_{0}(k)=0$ for all $k\ge1$, $r_{0}(0)=1$, and $r_{n}(k)=0$ for $n<k$. Also,
\begin{equation}\label{+-}
q^{\alpha_{1},\alpha_{2}}_n(t)=\displaystyle\sum_{k=1}^{m}r_{n}(k)p_{k}^{\alpha_{1},\alpha_{2}}(t)\ \  \mathrm{for\  all\  integers}\  m\ge n.
\end{equation}
Now,
\begin{align}\label{-}
\sum_{i=1}^{n}r_{i}\sum_{k=1}^{n}r_{n-i}(k-1)p_{k-1}^{\alpha_{1},\alpha_{2}}(t)&=\sum_{i=1}^{n-1}r_{i}\sum_{k=1}^{n}r_{n-i}(k-1)p_{k-1}^{\alpha_{1},\alpha_{2}}(t)+r_{n}\sum_{k=1}^{n}r_{0}(k-1)p_{k-1}^{\alpha_{1},\alpha_{2}}(t)\nonumber\\
&=\sum_{i=1}^{n-1}r_{i}\sum_{k=2}^{n}r_{n-i}(k-1)p_{k-1}^{\alpha_{1},\alpha_{2}}(t)+r_{n}p_{0}^{\alpha_{1},\alpha_{2}}(t)\nonumber\\
&=\sum_{i=1}^{n-1}r_{i}\sum_{j=1}^{n-1}r_{n-i}(j)p_{j}^{\alpha_{1},\alpha_{2}}(t)+r_{n}q_{0}^{\alpha_{1},\alpha_{2}}(t),\ \  \mathrm{(using\ (\ref{pmf}))}\nonumber\\
&=\sum_{i=1}^{n-1}r_{i}q_{n-i}^{\alpha_{1},\alpha_{2}}(t)+r_{n}q_{0}^{\alpha_{1},\alpha_{2}}(t),\ \  \mathrm{(using\ (\ref{+-}))}\nonumber\\
&=\sum_{i=1}^{n}r_{i}q_{n-i}^{\alpha_{1},\alpha_{2}}(t).
\end{align}
Substituting (\ref{-}) in (\ref{+}), we get the result.
\end{proof}
\section{Mixed Fractional Risk Process}
In this section,  we introduce a fractional version of the risk process given in (\ref{9}) by considering the inverse mixed stable subordinator in the premium received by the insurance company till time $t$. The MFPP is used to model the total claim received. 

Consider a fractional risk process defined as
\begin{equation}\label{1}
R_{\Lambda}^{\alpha_{1},\alpha_{2}}(t)\coloneqq u+\mu(1+\rho)\Lambda(Y_{\alpha_{1},\alpha_{2}}(t))-\sum_{i=1}^{N_{\Lambda}^{\alpha_{1},\alpha_{2}}(t)}X_{i}, \ \ t\ge0,
\end{equation}
 where the initial capital is $u>0$, the safety loading parameter is $\rho\ge0$ and  the iid random variables $X_{i}$'s have finite mean  $\mu>0$. Note that the fractional risk process $\{R_{\Lambda}^{\alpha_{1},\alpha_{2}}(t)\}_{t\ge0}$ is based on the mixed inverse stable subordinator and the MFNPP. It is assumed that the MFNPP $\{N_{\Lambda}^{\alpha_{1},\alpha_{2}}(t)\}_{t\ge0}$ is independent of the claims $\left\{X_{i}\right\}_{i \geq 1}$. This process is of interest because of its  martingale characterization.
 
 In the following theorem, we show that the fractional risk process defined in (\ref{1}) satisfies the net profit condition for $\rho>0$.

\begin{theorem}\label{6}
The fractional risk process  $\{R_{\Lambda}^{\alpha_{1},\alpha_{2}}(t)\}_{t\ge0}$ is a martingale (submartingale/supermartingale) for $\rho=0$ ($\rho>0/\rho<0$) with respect to the filtration
\begin{equation*}
\mathcal{F}_{t}=\sigma\left(N^{\alpha_{1},\alpha_{2}}_\Lambda(s), s \leq t\right) \vee \sigma\left(Y_{\alpha_{1},\alpha_{2}}(s), s \geq 0\right).
\end{equation*}
\end{theorem}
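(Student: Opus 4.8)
The plan is to verify the three defining properties of a (sub/super)martingale with respect to $\{\mathcal{F}_t\}$: adaptedness, integrability, and the conditional-expectation relation. Write $\mathcal{G}=\sigma\left(Y_{\alpha_{1},\alpha_{2}}(s),s\ge0\right)$, so that $\mathcal{G}\subseteq\mathcal{F}_s$ for every $s$. Adaptedness of $\{R_{\Lambda}^{\alpha_{1},\alpha_{2}}(t)\}$ holds once the claim sizes $X_i$ are carried in the filtration, so that the compound sum $\sum_{i=1}^{N_{\Lambda}^{\alpha_{1},\alpha_{2}}(t)}X_i$ is $\mathcal{F}_t$-measurable; integrability follows from $\mathbb{E}\bigl(\sum_{i=1}^{N_{\Lambda}^{\alpha_{1},\alpha_{2}}(t)}X_i\bigr)=\mu\,\mathbb{E}\bigl(\Lambda(Y_{\alpha_{1},\alpha_{2}}(t))\bigr)<\infty$, which in turn uses the finiteness of the mean of the MFNPP. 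The heart of the argument is the computation of $\mathbb{E}\bigl(R_{\Lambda}^{\alpha_{1},\alpha_{2}}(t)\mid\mathcal{F}_s\bigr)$ for $s\le t$.

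First I would observe that, since the whole trajectory of $Y_{\alpha_{1},\alpha_{2}}$ is $\mathcal{G}$-measurable and $\mathcal{G}\subseteq\mathcal{F}_s$, the premium term $\mu(1+\rho)\Lambda(Y_{\alpha_{1},\alpha_{2}}(t))$ is already $\mathcal{F}_s$-measurable and so pulls out of the conditional expectation. For the claim term I would split the sum at $N_{\Lambda}^{\alpha_{1},\alpha_{2}}(s)$:
\begin{equation*}
\sum_{i=1}^{N_{\Lambda}^{\alpha_{1},\alpha_{2}}(t)}X_i=\sum_{i=1}^{N_{\Lambda}^{\alpha_{1},\alpha_{2}}(s)}X_i+\sum_{i=N_{\Lambda}^{\alpha_{1},\alpha_{2}}(s)+1}^{N_{\Lambda}^{\alpha_{1},\alpha_{2}}(t)}X_i.
\end{equation*}
The first sum is $\mathcal{F}_s$-measurable and is kept unchanged; for the second I would condition first on $\mathcal{G}$. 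Given $\mathcal{G}$, the values $Y_{\alpha_{1},\alpha_{2}}(s)$ and $Y_{\alpha_{1},\alpha_{2}}(t)$ are fixed, and since $N_{\Lambda}^{\alpha_{1},\alpha_{2}}(t)=N\bigl(\Lambda(Y_{\alpha_{1},\alpha_{2}}(t))\bigr)$ with $N$ a rate-$1$ Poisson process independent of $Y_{\alpha_{1},\alpha_{2}}$, the increment $N_{\Lambda}^{\alpha_{1},\alpha_{2}}(t)-N_{\Lambda}^{\alpha_{1},\alpha_{2}}(s)$ is Poisson with mean $\Lambda(Y_{\alpha_{1},\alpha_{2}}(t))-\Lambda(Y_{\alpha_{1},\alpha_{2}}(s))$ and independent of the counts up to time $s$. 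Hence $\mathbb{E}\bigl(N_{\Lambda}^{\alpha_{1},\alpha_{2}}(t)-N_{\Lambda}^{\alpha_{1},\alpha_{2}}(s)\mid\mathcal{F}_s\bigr)=\Lambda(Y_{\alpha_{1},\alpha_{2}}(t))-\Lambda(Y_{\alpha_{1},\alpha_{2}}(s))$, and combining this with the independence of the fresh claims and their common mean $\mu$ via Wald's identity yields
\begin{equation*}
\mathbb{E}\Bigl(\sum_{i=N_{\Lambda}^{\alpha_{1},\alpha_{2}}(s)+1}^{N_{\Lambda}^{\alpha_{1},\alpha_{2}}(t)}X_i\,\Big|\,\mathcal{F}_s\Bigr)=\mu\bigl(\Lambda(Y_{\alpha_{1},\alpha_{2}}(t))-\Lambda(Y_{\alpha_{1},\alpha_{2}}(s))\bigr).
\end{equation*}

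Assembling the pieces and simplifying the coefficient of $\Lambda(Y_{\alpha_{1},\alpha_{2}}(t))$, I expect to reach
\begin{equation*}
\mathbb{E}\bigl(R_{\Lambda}^{\alpha_{1},\alpha_{2}}(t)\mid\mathcal{F}_s\bigr)=R_{\Lambda}^{\alpha_{1},\alpha_{2}}(s)+\mu\rho\bigl(\Lambda(Y_{\alpha_{1},\alpha_{2}}(t))-\Lambda(Y_{\alpha_{1},\alpha_{2}}(s))\bigr).
\end{equation*}
Since $\Lambda$ and $Y_{\alpha_{1},\alpha_{2}}$ are both non-decreasing, the correction term $\Lambda(Y_{\alpha_{1},\alpha_{2}}(t))-\Lambda(Y_{\alpha_{1},\alpha_{2}}(s))$ is non-negative for $t\ge s$; thus the conditional expectation equals $R_{\Lambda}^{\alpha_{1},\alpha_{2}}(s)$ when $\rho=0$ (martingale), dominates it when $\rho>0$ (submartingale), and is dominated by it when $\rho<0$ (supermartingale), which is exactly the assertion. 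The step I expect to be the main obstacle is the rigorous justification of this conditional-increment computation: one must argue that, conditionally on the entire path $\mathcal{G}$ of the inverse mixed stable subordinator, the increment $N_{\Lambda}^{\alpha_{1},\alpha_{2}}(t)-N_{\Lambda}^{\alpha_{1},\alpha_{2}}(s)$ is independent of $\sigma\left(N_{\Lambda}^{\alpha_{1},\alpha_{2}}(r),r\le s\right)$ with the stated Poisson law, and that the fresh claims are independent of $\mathcal{F}_s$. This hinges on the independence of $N$ and $Y_{\alpha_{1},\alpha_{2}}$ together with the independent-increments property of $N$, applied after the time change $t\mapsto\Lambda(Y_{\alpha_{1},\alpha_{2}}(t))$ that becomes deterministic once $\mathcal{G}$ is fixed; the remaining algebra is routine.
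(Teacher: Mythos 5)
Your proposal is correct and follows essentially the same route as the paper: split the claim sum at $N_{\Lambda}^{\alpha_{1},\alpha_{2}}(s)$, show that the conditional expectation of the fresh claims equals $\mu$ times the increment of $\Lambda(Y_{\alpha_{1},\alpha_{2}})$, and conclude from the monotonicity of $t\mapsto\Lambda(Y_{\alpha_{1},\alpha_{2}}(t))$ according to the sign of $\rho$. The only real difference is that where you re-derive the compensator identity $\mathbb{E}\bigl(N_{\Lambda}^{\alpha_{1},\alpha_{2}}(t)-N_{\Lambda}^{\alpha_{1},\alpha_{2}}(s)\mid\mathcal{F}_{s}\bigr)=\Lambda(Y_{\alpha_{1},\alpha_{2}}(t))-\Lambda(Y_{\alpha_{1},\alpha_{2}}(s))$ by conditioning on the path of $Y_{\alpha_{1},\alpha_{2}}$, the paper simply invokes the known martingale property of $\{N_{\Lambda}^{\alpha_{1},\alpha_{2}}(t)-\Lambda(Y_{\alpha_{1},\alpha_{2}}(t))\}_{t\ge0}$ from Aletti et al.\ (2018); your observation that adaptedness requires the claim sizes $X_{i}$ to be carried in the filtration is a legitimate point that the paper glosses over.
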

\begin{proof}
Note that the fractional risk process  $\{R_{\Lambda}^{\alpha_{1},\alpha_{2}}(t)\}_{t\ge0}$ is adapted to the given filtration $\mathcal{F}_{t}$. For $s\le t$, we have
\begin{align*}
&\mathbb{E}\left(R_{\Lambda}^{\alpha_{1},\alpha_{2}}(t)| \mathcal{F}_{s}\right)\\
&=\mathbb{E}\left(u+ \mu(1+\rho)\Lambda(Y_{\alpha_{1},\alpha_{2}}(t))-\sum_{i=1}^{N_{\Lambda}^{\alpha_{1},\alpha_{2}}(t)}X_{i}\Bigg|\mathcal{F}_{s}\right)\\
&=R_{\Lambda}^{\alpha_{1},\alpha_{2}}(s)+\mathbb{E}\left( \mu(1+\rho)\big(\Lambda(Y_{\alpha_{1},\alpha_{2}}(t))-\Lambda(Y_{\alpha_{1},\alpha_{2}}(s))\big)-\sum_{i=N_\Lambda^{\alpha_{1},\alpha_{2}}(s)+1}^{N_{\Lambda}^{\alpha_{1},\alpha_{2}}(t)}X_{i}\Bigg|\mathcal{F}_{s}\right)\\
&=R_{\Lambda}^{\alpha_{1},\alpha_{2}}(s)+
\mathbb{E} \left(\mu(1+\rho)\big(\Lambda(Y_{\alpha_{1},\alpha_{2}}(t))-\Lambda(Y_{\alpha_{1},\alpha_{2}}(s))\big)\big|\mathcal{F}_{s}\right)-\mathbb{E}\left(\mathbb{E}\left(\sum_{i=N_{\Lambda}^{\alpha_{1},\alpha_{2}}(s)+1}^{N_{\Lambda}^{\alpha_{1},\alpha_{2}}(t)} X_{i} \Bigg| \mathcal{F}_{t}\right) \Bigg| \mathcal{F}_{s}\right)\\
&=R_{\Lambda}^{\alpha_{1},\alpha_{2}}(s)+\mathbb{E}\left( \mu(1+\rho)\big(\Lambda(Y_{\alpha_{1},\alpha_{2}}(t))-\Lambda(Y_{\alpha_{1},\alpha_{2}}(s))\big)\big|\mathcal{F}_{s}\right)-\mathbb{E}\left(\left(N_{\Lambda}^{\alpha_{1},\alpha_{2}}(t)-N_{\Lambda}^{\alpha_{1},\alpha_{2}}(s)\right) \mu \big| \mathcal{F}_{s}\right)\\
&=R_{\Lambda}^{\alpha_{1},\alpha_{2}}(s)+\mu\rho\mathbb{E}\left( \big(\Lambda(Y_{\alpha_{1},\alpha_{2}}(t))-\Lambda(Y_{\alpha_{1},\alpha_{2}}(s))\big) \big| \mathcal{F}_{s}\right)\\
&\ \ -\mu \mathbb{E}\big(\left(N_{\Lambda}^{\alpha_{1},\alpha_{2}}(t)-\Lambda Y_{\alpha_{1},\alpha_{2}}(t)\right)-\left(N_{\Lambda}^{\alpha_{1},\alpha_{2}}(s)-\Lambda Y_{\alpha_{1},\alpha_{2}}(s)\right) \big| \mathcal{F}_{s}\big)\\
&=R_{\Lambda}^{\alpha_{1},\alpha_{2}}(s)+\mu\rho\mathbb{E}\left( \big(\Lambda(Y_{\alpha_{1},\alpha_{2}}(t))-\Lambda(Y_{\alpha_{1},\alpha_{2}}(s))\big)\big| \mathcal{F}_{s}\right), 
\end{align*}
where in the last step we have used that the process $\left\{N_{\Lambda}^{\alpha_{1},\alpha_{2}}(t)-\Lambda( Y_{\alpha_{1},\alpha_{2}}(t))\right\}_{t\ge0}$  is a martingale with respect to the given filtration $\mathcal{F}_{t}$ (see Aletti {\it et al.} (2018)). Therefore, the fractional risk process $\{R_{\Lambda}^{\alpha_{1},\alpha_{2}}(t)\}_{t\ge0}$ is a martingale if $\rho=0$. Since $\{\Lambda(Y_{\alpha_{1},\alpha_{2}}(t))\}_{t\ge0}$ is an increasing process, it follows that $\{R_{\Lambda}^{\alpha_{1},\alpha_{2}}(t)\}_{t\ge0}$ is a submartingale (supermartingale) if $\rho>0 \ (\rho<0)$.
\end{proof}

For $\Lambda(t)=\lambda t$, $\lambda>0$, the fractional risk process defined in (\ref{1}) reduces to a special case which we call as the mixed fractional risk process (MFRP). It is defined as 
\begin{equation}\label{*}
R^{\alpha_{1},\alpha_{2}}(t)=u+\mu(1+\rho)\lambda Y_{\alpha_{1},\alpha_{2}}(t)-\sum_{i=1}^{N^{\alpha_{1},\alpha_{2}}(t)}X_{i}, \ \ t\ge0.
\end{equation}
 The claim numbers in MFRP are modelled using MFPP. As we have shown that the fractional risk process $\{R_{\Lambda}^{\alpha_{1},\alpha_{2}}(t)\}_{t\ge0}$ is a martingale, it follows that MFRP  $\{R^{\alpha_{1},\alpha_{2}}(t)\}_{t\ge0}$ is also a martingale (submartingale/supermartingale) for $\rho=0$ ($\rho>0$/$\rho<0$) with respect to the filtration 	$\mathcal{F}_{t}=\sigma\left(N^{\alpha_{1},\alpha_{2}}(s), s \leq t\right) \vee \sigma\left(Y_{\alpha_{1},\alpha_{2}}(s), s \geq 0\right)$.
 
 The net or equivalence principle (see Mikosch (2009), Section 3.1.3) remains unaffected by the time change imposed by $Y_{\alpha_{1},\alpha_{2}}$ in (\ref{*}) as $\{(X_{1}+X_{2}+\dots+X_{N^{\alpha_{1}, \alpha_{2}}(t)})-\mu\lambda Y_{\alpha_{1},\alpha_{2}}(t)\}_{t\ge0}$ is a martingale. Note that $\mathbb{E}\left(N^{\alpha_{1}, \alpha_{2}}(t)\right)=\lambda\mathbb{E}\left(Y_{\alpha_{1},\alpha_{2}}(t)\right)$. For $C_1=0$, the inverse mixed stable subordinator reduces to inverse stable subordinator and thus MFPP reduces to TFPP. Therefore, the fractional risk process $\{R^{\alpha}(t)\}_{t\ge0}$ introduced and studied by Kumar {\it et al.} (2019) becomes a special case of MFRP. Also, the expectation of the MFRP is given by
 \begin{equation*}
\mathbb{E} \left(R^{\alpha_{1},\alpha_{2}}(t)\right)=u+\mu\rho\lambda U_{{\alpha_{1},\alpha_{2}}}(t) ,
 \end{equation*}
where $U_{{\alpha_{1},\alpha_{2}}}(t)$ is given by (\ref{7}).
\subsubsection{The LRD property of MFRP}Next, we establish the LRD property of MFRP. For that purpose, we require the variance and covariance of MFRP which are evaluated as follows.
\begin{proposition}\label{covr}
The covariance of MFRP is given by
	\begin{equation*}
	{\operatorname{Cov}\left(R^{\alpha_{1},\alpha_{2}}(s), R^{\alpha_{1},\alpha_{2}}(t)\right)=\mu^{2} \lambda^{2} \rho^{2} \operatorname{Cov}\left(Y_{\alpha_{1},\alpha_{2}}(s), Y_{\alpha_{1},\alpha_{2}}(t)\right)+\mathbb{E} \left(X_{1}^{2}\right)\mathbb{E}\left(N^{\alpha_{1},\alpha_{2}}(s)\right)},
	\end{equation*}
	where $0\leq s\leq t$. Hence, its variance is given by
	\begin{equation*}
	\operatorname{Var}\left(R^{\alpha_{1},\alpha_{2}}(t)\right)=\mu^{2} \lambda^{2} \rho^{2} \operatorname{Var}\left(Y_{\alpha_{1},\alpha_{2}}(t)\right)+\mathbb{E}\left( X_{1}^{2}\right)\mathbb{E}\left(N^{\alpha_{1},\alpha_{2}}(t)\right).
	\end{equation*}
\end{proposition}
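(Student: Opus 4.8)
The plan is to discard the constant $u$ (which leaves covariances unchanged) and expand $\operatorname{Cov}(R^{\alpha_{1},\alpha_{2}}(s),R^{\alpha_{1},\alpha_{2}}(t))$ by bilinearity into the contributions of the premium part $\mu(1+\rho)\lambda Y_{\alpha_{1},\alpha_{2}}$ and the claim part $C^{\alpha_{1},\alpha_{2}}$ of the surplus (\ref{*}). Writing $a=\mu(1+\rho)\lambda$ and abbreviating $Y_{s}=Y_{\alpha_{1},\alpha_{2}}(s)$, $C_{s}=C^{\alpha_{1},\alpha_{2}}(s)$, etc., this yields
\begin{align*}
&\operatorname{Cov}\left(R^{\alpha_{1},\alpha_{2}}(s),R^{\alpha_{1},\alpha_{2}}(t)\right)\\
&\quad=a^{2}\operatorname{Cov}(Y_{s},Y_{t})-a\operatorname{Cov}(Y_{s},C_{t})-a\operatorname{Cov}(C_{s},Y_{t})+\operatorname{Cov}(C_{s},C_{t}),
\end{align*}
so it suffices to evaluate the three remaining covariances on the right.

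The central tool throughout is conditioning on the $\sigma$-algebra $\mathcal{G}=\sigma(Y_{\alpha_{1},\alpha_{2}}(u),\,u\ge0)$ generated by the whole path of the inverse mixed stable subordinator. Since $C^{\alpha_{1},\alpha_{2}}(t)=\sum_{i=1}^{N(Y_{t})}X_{i}$ with a rate-$\lambda$ Poisson process $N$ and iid claims $X_{i}$, both independent of $\mathcal{G}$, I first record $\mathbb{E}(C_{t}\,|\,\mathcal{G})=\lambda\mu Y_{t}$. For the two mixed terms, $Y_{s}$ is $\mathcal{G}$-measurable, so the conditional covariance vanishes and the law of total covariance reduces everything to the $\mathcal{G}$-conditional means: $\operatorname{Cov}(Y_{s},C_{t})=\operatorname{Cov}(Y_{s},\lambda\mu Y_{t})=\lambda\mu\operatorname{Cov}(Y_{s},Y_{t})$, and symmetrically $\operatorname{Cov}(C_{s},Y_{t})=\lambda\mu\operatorname{Cov}(Y_{s},Y_{t})$.

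The main computational step is $\operatorname{Cov}(C_{s},C_{t})$. For $s\le t$ I decompose $C_{t}=C_{s}+D$ with $D=\sum_{i=N_{s}+1}^{N_{t}}X_{i}$. Conditionally on $\mathcal{G}$ the increment $N_{t}-N_{s}$ is Poisson with mean $\lambda(Y_{t}-Y_{s})$ and independent of $N_{s}$, and $D$ uses a disjoint block of claims, so $C_{s}$ and $D$ are conditionally independent; hence $\operatorname{Cov}(C_{s},C_{t}\,|\,\mathcal{G})=\operatorname{Var}(C_{s}\,|\,\mathcal{G})$. The compound-Poisson variance identity gives $\operatorname{Var}(C_{s}\,|\,\mathcal{G})=\lambda Y_{s}\,\mathbb{E}(X_{1}^{2})$, whose $\mathcal{G}$-expectation is $\mathbb{E}(X_{1}^{2})\,\lambda\mathbb{E}(Y_{s})=\mathbb{E}(X_{1}^{2})\,\mathbb{E}(N^{\alpha_{1},\alpha_{2}}(s))$ by (\ref{E}). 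Adding the remaining piece $\operatorname{Cov}\bigl(\mathbb{E}(C_{s}\,|\,\mathcal{G}),\mathbb{E}(C_{t}\,|\,\mathcal{G})\bigr)=\lambda^{2}\mu^{2}\operatorname{Cov}(Y_{s},Y_{t})$ from the law of total covariance yields $\operatorname{Cov}(C_{s},C_{t})=\mathbb{E}(X_{1}^{2})\,\mathbb{E}(N^{\alpha_{1},\alpha_{2}}(s))+\lambda^{2}\mu^{2}\operatorname{Cov}(Y_{s},Y_{t})$.

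Substituting the three evaluations back, the coefficient of $\operatorname{Cov}(Y_{s},Y_{t})$ collapses to $a^{2}-2a\lambda\mu+\lambda^{2}\mu^{2}=(a-\lambda\mu)^{2}=(\mu\lambda\rho)^{2}$, giving precisely the claimed formula; putting $s=t$ and invoking (\ref{V}) produces the variance. I expect the main obstacle to be the careful justification of the conditional independence of $C_{s}$ and $D$ given $\mathcal{G}$ together with the compound-Poisson variance identity for $\operatorname{Var}(C_{s}\,|\,\mathcal{G})$; the rest is bilinearity and the law of total covariance.
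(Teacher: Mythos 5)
Your proposal is correct and reaches the same formula, but the key computation is carried out by a different route than the paper's. The skeleton is identical: both discard $u$, expand bilinearly into the premium term, the two cross terms, and the claim--claim term, and both handle the cross terms by conditioning on the path of $Y_{\alpha_{1},\alpha_{2}}$ (the paper does this via an iterated-conditioning chain for $J(s,t)$ and $K(s,t)$; your law-of-total-covariance phrasing is equivalent). The difference is in $\operatorname{Cov}(C_{s},C_{t})$, the paper's $I(s,t)$: the paper expands the double sum $\sum_{i}\sum_{j}X_{i}X_{j}I\{N^{\alpha_{1},\alpha_{2}}(s)\geq j,\,N^{\alpha_{1},\alpha_{2}}(t)\geq i\}$, separates the diagonal, and then imports the covariance formula $\operatorname{Cov}(N^{\alpha_{1},\alpha_{2}}(s),N^{\alpha_{1},\alpha_{2}}(t))=\mathbb{E}(N^{\alpha_{1},\alpha_{2}}(s))+\lambda^{2}\operatorname{Cov}(Y_{\alpha_{1},\alpha_{2}}(s),Y_{\alpha_{1},\alpha_{2}}(t))$ from Aletti et al.\ (2018) as a black box. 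You instead condition on $\mathcal{G}=\sigma(Y_{\alpha_{1},\alpha_{2}}(u),u\ge0)$, use the conditional compound-Poisson structure and the conditional independence of $C_{s}$ and the increment $D$, and so re-derive that covariance identity from scratch rather than citing it. Your route is self-contained and arguably cleaner (the conditional independence you flag does hold: given $\mathcal{G}$, $N(Y_{s})$ and $N(Y_{t})-N(Y_{s})$ are independent Poisson variables and the claims occupy disjoint index blocks, so the conditional covariance of $C_{s}$ and $D$ vanishes); the paper's route is more elementary in that it avoids conditional-independence bookkeeping at the price of leaning on an external result. The final collapse of the coefficient of $\operatorname{Cov}(Y_{\alpha_{1},\alpha_{2}}(s),Y_{\alpha_{1},\alpha_{2}}(t))$ to $\mu^{2}\lambda^{2}\rho^{2}$ is the same in both.
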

\begin{proof}
We recall that $\mu=\mathbb{E}(X_1)$. From (\ref{*}), we have for $s\leq t$
\begin{align}\label{R}
\operatorname{Cov}\left(R^{\alpha_{1},\alpha_{2}}(s), R^{\alpha_{1},\alpha_{2}}(t)\right)&=I(s,t)-\lambda(1+\rho)\mathbb{E}(X_1)(J(s,t)+K(s,t))\nonumber\\
&\ \ +\lambda^{2} (1+\rho)^{2} (\mathbb{E}(X_1))^{2} \operatorname{Cov}\left(Y_{\alpha_{1},\alpha_{2}}(s), Y_{\alpha_{1},\alpha_{2}}(t)\right),
\end{align}
where 
\begin{align*}
I(s,t)&=\operatorname{Cov}\left(\sum_{j=1}^{N^{\alpha_{1},\alpha_{2}}(s)} X_{j}, \sum_{i=1}^{N^{\alpha_{1},\alpha_{2}}(t)} X_{i}\right),\nonumber\\
J(s,t)&=\operatorname{Cov}\left(Y_{\alpha_{1},\alpha_{2}}(s), \sum_{i=1}^{N^{\alpha_{1},\alpha_{2}}(t)} X_{i}\right),\ \ K(s,t)=\operatorname{Cov}\left(Y_{\alpha_{1},\alpha_{2}}(t), \sum_{j=1}^{N^{\alpha_{1},\alpha_{2}}(s)} X_{j}\right).
\end{align*}
Now, 
\begin{align}\label{asde1v}
I(s,t)&=-\left(\mathbb{E} X_{1}\right)^{2} \mathbb{E}\left(N^{\alpha_{1},\alpha_{2}}(s)\right)\mathbb{E}\left(N^{\alpha_{1},\alpha_{2}}(t)\right)\nonumber\\
&\ \ \ \ +\mathbb{E}\left(\sum_{i=1}^{\infty} \sum_{j=1}^{\infty} X_{i} X_{j} I\left\{N^{\alpha_{1},\alpha_{2}}(s) \geq j, N^{\alpha_{1},\alpha_{2}}(t) \geq i\right\}\right)\nonumber\\
&=-\left(\mathbb{E} X_{1}\right)^{2} \mathbb{E}\left(N^{\alpha_{1},\alpha_{2}}(s)\right)\mathbb{E}\left(N^{\alpha_{1},\alpha_{2}}(t)\right)+\mathbb{E}\left(\sum_{j=1}^{\infty} X_{j}^{2} I\left\{N^{\alpha_{1},\alpha_{2}}(s) \geq j\right\}\right)\nonumber\\ 
&\ \ \ \ +\mathbb{E}\left(\underset{i \neq j}{\sum\sum}X_{i} X_{j} I\left\{N^{\alpha_{1},\alpha_{2}}(s) \geq j, N^{\alpha_{1},\alpha_{2}}(t) \geq i\right\}\right)\nonumber\\
&=-\left(\mathbb{E} X_{1}\right)^{2} \mathbb{E}\left(N^{\alpha_{1},\alpha_{2}}(s)\right)\mathbb{E}\left(N^{\alpha_{1},\alpha_{2}}(t)\right)+\mathbb{E}(X_{j}^{2})\sum_{j=1}^{\infty}  \mathrm{Pr}\{N^{\alpha_{1},\alpha_{2}}(s) \geq j\}\nonumber\\
&\ \ \ \ +(\mathbb{E}X_{1})^{2}\left(\sum_{i=1}^{\infty} \sum_{j=1}^{\infty}\mathrm{Pr}\{N^{\alpha_{1},\alpha_{2}}(s) \geq j, N^{\alpha_{1},\alpha_{2}}(t) \geq i\}-\sum_{j=1}^{\infty}  \mathrm{Pr}\{N^{\alpha_{1},\alpha_{2}}(s) \geq j\}\right)\nonumber\\
&=\operatorname{Var}(X_{1})\mathbb{E}\big(N^{\alpha_{1},\alpha_{2}}(s)\big) + (\mathbb{E}(X_1))^{2}\operatorname{Cov}\left(N^{\alpha_{1},\alpha_{2}}(s) ,N^{\alpha_{1},\alpha_{2}}(t)\right).
\end{align}
The covariance of MFPP is obtained by Aletti {\it et al.} (2018) which is given by
\begin{equation}\label{qazq1}
\operatorname{Cov}\left(N^{\alpha_{1}, \alpha_{2}}(s), N^{\alpha_{1}, \alpha_{2}}(t)\right)
=\mathbb{E}\big(N^{\alpha_{1},\alpha_{2}}(s)\big)+\lambda^{2}\operatorname{Cov}\left(Y_{\alpha_{1},\alpha_{2}}(s), Y_{\alpha_{1},\alpha_{2}}(t)\right).
\end{equation}
Substituting (\ref{qazq1}) in (\ref{asde1v}), we get
\begin{equation}\label{covxx}
I(s,t)=\mathbb{E}(X_{1}^{2})\mathbb{E}\big(N^{\alpha_{1},\alpha_{2}}(s)\big)+\lambda^{2}(\mathbb{E}(X_1))^{2}\operatorname{Cov}\left(Y_{\alpha_{1},\alpha_{2}}(s), Y_{\alpha_{1},\alpha_{2}}(t)\right).
\end{equation}
Also,
\begin{align}\label{Covyx}
J(s,t)&= \mathbb{E}\left(Y_{\alpha_{1},\alpha_{2}}(s) \sum_{i=1}^{N^{\alpha_{1},\alpha_{2}}(t)} X_{i}\right)-\mathbb{E}\left(Y_{\alpha_{1},\alpha_{2}}(s)\right)\mathbb{E}\left(\sum_{i=1}^{N^{\alpha_{1},\alpha_{2}}(t)} X_{i}\right)\nonumber\\
&=\mathbb{E}\left(\mathbb{E}\left(Y_{\alpha_{1},\alpha_{2}}(s) \sum_{i=1}^{N^{\alpha_{1},\alpha_{2}}(t)} X_{i}\Bigg| Y_{\alpha_{1},\alpha_{2}}(s),N^{\alpha_{1},\alpha_{2}}(t)\right)\right)-\mathbb{E}(X_1)\mathbb{E}\big(N^{\alpha_{1},\alpha_{2}}(t)\big)\mathbb{E}(Y_{\alpha_{1},\alpha_{2}}(s))\nonumber\\
&=\mathbb{E}\left(Y_{\alpha_{1},\alpha_{2}}(s)\mathbb{E}\left(\sum_{i=1}^{N^{\alpha_{1},\alpha_{2}}(t)} X_{i}\Bigg| Y_{\alpha_{1},\alpha_{2}}(s),N^{\alpha_{1},\alpha_{2}}(t)\right)\right)-\mathbb{E}(X_1)\mathbb{E}\big(N^{\alpha_{1},\alpha_{2}}(t)\big)\mathbb{E}(Y_{\alpha_{1},\alpha_{2}}(s))\nonumber\\
&=\mathbb{E}(X_1)\mathbb{E}\big(Y_{\alpha_{1},\alpha_{2}}(s)N^{\alpha_{1},\alpha_{2}}(t)\big)-\lambda\mathbb{E}(X_1)\mathbb{E}(Y_{\alpha_{1},\alpha_{2}}(s))\mathbb{E}(Y_{\alpha_{1},\alpha_{2}}(t))\nonumber\\
&=\mathbb{E}(X_1)\mathbb{E}\left(\mathbb{E}\big(Y_{\alpha_{1},\alpha_{2}}(s)N^{\alpha_{1},\alpha_{2}}(t)\big|Y_{\alpha_{1},\alpha_{2}}(l):0\le l\le t\big)\right)-\lambda\mathbb{E}(X_1)\mathbb{E}(Y_{\alpha_{1},\alpha_{2}}(s))\mathbb{E}(Y_{\alpha_{1},\alpha_{2}}(t))\nonumber\\
&=\mathbb{E}(X_1)\mathbb{E}\left(Y_{\alpha_{1},\alpha_{2}}(s)\mathbb{E}\big(N^{\alpha_{1},\alpha_{2}}(t)\big|Y_{\alpha_{1},\alpha_{2}}(l):0\le l\le t\big)\right)-\lambda\mathbb{E}(X_1)\mathbb{E}(Y_{\alpha_{1},\alpha_{2}}(s))\mathbb{E}(Y_{\alpha_{1},\alpha_{2}}(t))\nonumber\\
&=\lambda\mathbb{E}(X_1)\mathbb{E}(Y_{\alpha_{1},\alpha_{2}}(s)Y_{\alpha_{1},\alpha_{2}}(t))-\lambda\mathbb{E}(X_1)\mathbb{E}(Y_{\alpha_{1},\alpha_{2}}(s))\mathbb{E}(Y_{\alpha_{1},\alpha_{2}}(t))\nonumber\\
&=\lambda\mathbb{E}(X_1)\operatorname{Cov}\left(Y_{\alpha_{1},\alpha_{2}}(s), Y_{\alpha_{1},\alpha_{2}}(t)\right).
\end{align}
Similarly,
\begin{equation}\label{covyxj}
K(s,t)=\lambda\mathbb{E}(X_1)\operatorname{Cov}\left(Y_{\alpha_{1},\alpha_{2}}(s), Y_{\alpha_{1},\alpha_{2}}(t)\right).
\end{equation}
Using (\ref{covxx}), (\ref{Covyx}) and (\ref{covyxj}) in (\ref{R}), we get
\begin{align}\label{fin}
\operatorname{Cov}\left(R^{\alpha_{1},\alpha_{2}}(s), R^{\alpha_{1},\alpha_{2}}(t)\right)&=\lambda^{2}(1+\rho)^{2}(\mathbb{E}(X_1))^{2}   \operatorname{Cov}\left(Y_{\alpha_{1},\alpha_{2}}(s), Y_{\alpha_{1},\alpha_{2}}(t)\right)+\mathbb{E}( X_{1}^{2})\mathbb{E}\left( N^{\alpha_{1},\alpha_{2}}(s)\right)\nonumber\\
&\ \ +\lambda^{2}(\mathbb{E}(X_1))^{2} \operatorname{Cov}\left(Y_{\alpha_{1},\alpha_{2}}(s), Y_{\alpha_{1},\alpha_{2}}(t)\right)\nonumber\\
&\ \ -2\lambda^{2}(1+\rho)(\mathbb{E}(X_1))^{2} \operatorname{Cov}\left(Y_{\alpha_{1},\alpha_{2}}(s), Y_{\alpha_{1},\alpha_{2}}(t)\right)\nonumber\\
&=\lambda^{2} \rho^{2}(\mathbb{E}(X_1))^{2}  \operatorname{Cov}\left(Y_{\alpha_{1},\alpha_{2}}(s), Y_{\alpha_{1},\alpha_{2}}(t)\right)+\mathbb{E}\left( X_{1}^{2}\right)\mathbb{E}\left(N^{\alpha_{1},\alpha_{2}}(s)\right).
\end{align}
	
Put $s=t$ in (\ref{fin}) to get the required variance of MFRP. This completes the proof.
\end{proof}
\begin{theorem}
The risk process $\{R^{\alpha_{1},\alpha_{2}}(t)\}_{t\ge0}$ exhibits the LRD property.
\end{theorem}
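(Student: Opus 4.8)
The plan is to read off the correlation function of the MFRP directly from Proposition \ref{covr} and to extract its decay rate as $t\to\infty$ with $s>0$ fixed. Concretely, I would study
\[
\operatorname{Corr}\left(R^{\alpha_{1},\alpha_{2}}(s), R^{\alpha_{1},\alpha_{2}}(t)\right)=\frac{\operatorname{Cov}\left(R^{\alpha_{1},\alpha_{2}}(s), R^{\alpha_{1},\alpha_{2}}(t)\right)}{\sqrt{\operatorname{Var}\left(R^{\alpha_{1},\alpha_{2}}(s)\right)}\,\sqrt{\operatorname{Var}\left(R^{\alpha_{1},\alpha_{2}}(t)\right)}}
\]
and show it is asymptotically $d(s)\,t^{-\nu}$ with $\nu\in(0,1)$, which is precisely the criterion (\ref{lrd}). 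The three asymptotic inputs I would use are the covariance limit (\ref{co}), the variance growth (\ref{3.5}), and the mean growth (\ref{Tau}) together with $\mathbb{E}\left(N^{\alpha_{1},\alpha_{2}}(t)\right)=\lambda U_{\alpha_{1},\alpha_{2}}(t)$ from (\ref{E}).

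First I would handle the numerator. By Proposition \ref{covr} it equals $\mu^{2}\lambda^{2}\rho^{2}\operatorname{Cov}\left(Y_{\alpha_{1},\alpha_{2}}(s),Y_{\alpha_{1},\alpha_{2}}(t)\right)+\mathbb{E}(X_{1}^{2})\mathbb{E}\left(N^{\alpha_{1},\alpha_{2}}(s)\right)$. The second summand does not depend on $t$, and by (\ref{co}) the first summand converges, as $t\to\infty$, to $\mu^{2}\lambda^{2}\rho^{2}\,(s^{2\alpha_{1}}/C_{1}^{2})\,E_{\alpha_{1}-\alpha_{2},2\alpha_{1}+1}^{2}\left(-C_{2}s^{\alpha_{1}-\alpha_{2}}/C_{1}\right)$. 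Hence the numerator tends to a finite positive constant $d_{0}(s)$ that depends only on $s$.

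Next I would find the growth of $\operatorname{Var}\left(R^{\alpha_{1},\alpha_{2}}(t)\right)=\mu^{2}\lambda^{2}\rho^{2}\operatorname{Var}\left(Y_{\alpha_{1},\alpha_{2}}(t)\right)+\mathbb{E}(X_{1}^{2})\lambda U_{\alpha_{1},\alpha_{2}}(t)$. By (\ref{3.5}) the first term is of order $t^{2\alpha_{2}}$, while by (\ref{Tau}) the second is only of order $t^{\alpha_{2}}$; since $2\alpha_{2}>\alpha_{2}$, the variance-of-$Y$ term dominates, giving
\[
\operatorname{Var}\left(R^{\alpha_{1},\alpha_{2}}(t)\right)\sim\frac{\mu^{2}\lambda^{2}\rho^{2}}{C_{2}^{2}}\left(\frac{2}{\Gamma(2\alpha_{2}+1)}-\frac{1}{\left(\Gamma(\alpha_{2}+1)\right)^{2}}\right)t^{2\alpha_{2}},
\]
so that $\sqrt{\operatorname{Var}\left(R^{\alpha_{1},\alpha_{2}}(t)\right)}\sim c\,t^{\alpha_{2}}$ for an explicit constant $c>0$. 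Since $\operatorname{Var}\left(R^{\alpha_{1},\alpha_{2}}(s)\right)$ is constant in $t$, dividing yields $\operatorname{Corr}\left(R^{\alpha_{1},\alpha_{2}}(s),R^{\alpha_{1},\alpha_{2}}(t)\right)\sim d(s)\,t^{-\alpha_{2}}$ with $d(s)=d_{0}(s)\big/\big(c\sqrt{\operatorname{Var}(R^{\alpha_{1},\alpha_{2}}(s))}\big)$ independent of $t$; as $0<\alpha_{2}<\alpha_{1}<1$ we have $\nu=\alpha_{2}\in(0,1)$, establishing the LRD property.

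The analysis is essentially bookkeeping once the asymptotics are assembled, so the only delicate point is identifying the dominant term in the denominator, namely that the $\rho^{2}$-weighted variance of $Y_{\alpha_{1},\alpha_{2}}$ (order $t^{2\alpha_{2}}$) beats the mean contribution (order $t^{\alpha_{2}}$). I would also flag the boundary case $\rho=0$ separately: there the $\rho^{2}$ terms drop out, the denominator is governed by $\mathbb{E}(X_{1}^{2})\lambda U_{\alpha_{1},\alpha_{2}}(t)\sim c'\,t^{\alpha_{2}}$ via (\ref{Tau}), the numerator reduces to the constant $\mathbb{E}(X_{1}^{2})\mathbb{E}\left(N^{\alpha_{1},\alpha_{2}}(s)\right)$, and one obtains $\nu=\alpha_{2}/2\in(0,1)$, so the LRD property persists in all cases $\rho\ge0$.
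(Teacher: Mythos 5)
Your proposal is correct and follows essentially the same route as the paper: plug the covariance formula from Proposition \ref{covr} into the correlation, use (\ref{co}) to see the numerator settles to an $s$-dependent constant, use (\ref{3.5}), (\ref{Tau}) and (\ref{E}) to see the denominator grows like $t^{\alpha_{2}}$, and conclude $\nu=\alpha_{2}\in(0,1)$. Your explicit identification of the dominant $t^{2\alpha_{2}}$ term and your separate treatment of the boundary case $\rho=0$ (giving $\nu=\alpha_{2}/2$) are welcome refinements that the paper's proof leaves implicit.
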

\begin{proof}
For fixed $s$ and large $t$, it follows by Proposition \ref{covr} that
\begin{equation*}
\operatorname{Corr}\left(R^{\alpha_{1},\alpha_{2}}(s), R^{\alpha_{1},\alpha_{2}}(t)\right)=\dfrac{\mu^{2} \lambda^{2} \rho^{2} \operatorname{Cov}\left(Y_{\alpha_{1},\alpha_{2}}(s), Y_{\alpha_{1},\alpha_{2}}(t)\right)+\mathbb{E}\left( X_{1}^{2}\right)\mathbb{E}\left( N^{\alpha_{1},\alpha_{2}}(s)\right)}{\sqrt{\operatorname{Var}\left(R^{\alpha_{1},\alpha_{2}}(s)\right)} \sqrt{\mu^{2} \lambda^{2} \rho^{2} \operatorname{Var}\left(Y_{\alpha_{1},\alpha_{2}}(t)\right)+\mathbb{E}\left( X_{1}^{2}\right)\mathbb{E}\left( N^{\alpha_{1},\alpha_{2}}(t)\right)}}.
\end{equation*}
Using (\ref{Tau}), (\ref{3.5}), (\ref{co}) and  (\ref{E}) in the above equation, we get
\begin{align*}
\operatorname{Corr}&\left(R^{\alpha_{1},\alpha_{2}}(s), R^{\alpha_{1},\alpha_{2}}(t)\right)\\
&\sim \dfrac{\mu^{2} \lambda^{2} \rho^{2}s^{2\alpha_{1}}C_{1}^{-2} E_{\alpha_{1}-\alpha_{2}, 2\alpha_{1}+1}^{2}\left(-C_{2}s^{\alpha_{1}-\alpha_{2}}/C_{1}\right)+\mathbb{E} \left(X_{1}^{2}\right)\mathbb{E}\left( N^{\alpha_{1},\alpha_{2}}(s)\right)}{\sqrt{\operatorname{Var}\left(R^{\alpha_{1},\alpha_{2}}(s)\right)}\sqrt{\dfrac{\mu^{2} \lambda^{2} \rho^{2}t^{2\alpha_{2}}}{C_{2}^{2}}\left(\dfrac{2}{\Gamma(2\alpha_{2}+1)}-\dfrac{1}{\left(\Gamma(\alpha_{2}+1)\right)^{2}}\right)+\dfrac{\mathbb{E}\left( X_{1}^{2}\right)t^{\alpha_{2}}}{C_{2}\Gamma(\alpha_{2}+1)}}}\\
&\sim d_1(s)t^{-\alpha_{2}}.
\end{align*}
As $\alpha_{2}\in(0,1)$, the LRD property of MFRP is established.
\end{proof}
\subsection{Mixed fractional Poissonian noise risk process}
For a fixed $\delta>0$, the increments $Z^{\alpha_{1},\alpha_{2}}_{\delta}(t)$, $t\ge0$, of the MFRP $\{R^{\alpha_{1}, \alpha_{2}}(t)\}_{t\ge0}$ is defined as
\begin{equation}\label{qlq1}
Z^{\alpha_{1},\alpha_{2}}_{\delta}(t)=R^{\alpha_{1}, \alpha_{2}}(t+\delta)-R^{\alpha_{1}, \alpha_{2}}(t).
\end{equation}
The process  $Z^{\alpha_{1},\alpha_{2}}_{\delta}\coloneqq \{Z^{\alpha_{1},\alpha_{2}}_{\delta}(t)\}_{t\ge0}$ is called the mixed fractional Poissonian noise risk process (MFPNRP). Next, we show that the MFPNRP exhibits the SRD property.
\begin{theorem}\label{varbgfff}
The MFPNRP $Z^{\alpha_{1},\alpha_{2}}_{\delta}$ has the SRD property.
\end{theorem}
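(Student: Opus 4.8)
The plan is to compute the correlation function of the increment process $Z^{\alpha_{1},\alpha_{2}}_{\delta}$ for fixed $s$ and large $t$, and show it decays like $t^{-\nu}$ with $\nu\in(1,2)$, matching the SRD criterion in Definition 2.3. First I would expand the covariance of increments using bilinearity:
\begin{align*}
\operatorname{Cov}\left(Z^{\alpha_{1},\alpha_{2}}_{\delta}(s), Z^{\alpha_{1},\alpha_{2}}_{\delta}(t)\right)
&=\operatorname{Cov}\left(R^{\alpha_{1},\alpha_{2}}(s+\delta),R^{\alpha_{1},\alpha_{2}}(t+\delta)\right)-\operatorname{Cov}\left(R^{\alpha_{1},\alpha_{2}}(s+\delta),R^{\alpha_{1},\alpha_{2}}(t)\right)\\
&\ \ -\operatorname{Cov}\left(R^{\alpha_{1},\alpha_{2}}(s),R^{\alpha_{1},\alpha_{2}}(t+\delta)\right)+\operatorname{Cov}\left(R^{\alpha_{1},\alpha_{2}}(s),R^{\alpha_{1},\alpha_{2}}(t)\right).
\end{align*}
Since $s$ is fixed and $t$ is large, each of the four terms has $s+\delta$ (or $s$) as the smaller argument, so by Proposition \ref{covr} the term $\mathbb{E}(X_1^2)\mathbb{E}(N^{\alpha_{1},\alpha_{2}}(\cdot))$ depends only on the smaller (fixed) time and hence cancels pairwise, leaving only the $Y_{\alpha_{1},\alpha_{2}}$-covariance contributions. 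Thus $\operatorname{Cov}(Z^{\alpha_{1},\alpha_{2}}_{\delta}(s), Z^{\alpha_{1},\alpha_{2}}_{\delta}(t))$ reduces to $\mu^2\lambda^2\rho^2$ times the covariance of the increments of $Y_{\alpha_{1},\alpha_{2}}$.

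Next I would extract the large-$t$ asymptotics of this increment covariance. Writing the surviving terms as a difference $\operatorname{Cov}(Y_{\alpha_{1},\alpha_{2}}(s+\delta), Y_{\alpha_{1},\alpha_{2}}(t+\delta))-\operatorname{Cov}(Y_{\alpha_{1},\alpha_{2}}(s+\delta),Y_{\alpha_{1},\alpha_{2}}(t))$ (and similarly with $s$), I would use the asymptotic form (\ref{co}) for the covariance of the inverse mixed stable subordinator. The key point is that the leading $t$-dependence in (\ref{co}) comes through the smaller argument via the factor $E_{\alpha_{1}-\alpha_{2},2\alpha_{1}+1}^{2}$, so that the difference over the shift $\delta$ in the \emph{large} variable behaves, via a mean-value/Taylor estimate, like the derivative of the relevant power of $t$. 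Since the variance of a single increment $\operatorname{Var}(Z^{\alpha_{1},\alpha_{2}}_{\delta}(t))$ grows like $t^{\alpha_2}$ (dominated by the $\mathbb{E}(X_1^2)\mathbb{E}(N^{\alpha_1,\alpha_2}(t+\delta))$ term through (\ref{Tau}) and (\ref{E})), the denominator $\sqrt{\operatorname{Var}(Z^{\alpha_{1},\alpha_{2}}_{\delta}(s))}\sqrt{\operatorname{Var}(Z^{\alpha_{1},\alpha_{2}}_{\delta}(t))}$ contributes a factor $t^{\alpha_2/2}$, while the numerator picks up the decay rate $t^{\alpha_2-1}$ coming from differencing the $t^{\alpha_2}$-type growth.

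Combining these, I expect to obtain
\begin{equation*}
\operatorname{Corr}\left(Z^{\alpha_{1},\alpha_{2}}_{\delta}(s),Z^{\alpha_{1},\alpha_{2}}_{\delta}(t)\right)\sim d_{2}(s)\,t^{-(1+\alpha_{2}/2)},
\end{equation*}
so that $\nu=1+\alpha_2/2$. Because $\alpha_2\in(0,1)$, we have $\nu\in(1,3/2)\subset(1,2)$, which by Definition 2.3 establishes the SRD property. The main obstacle will be the second step: making the differencing argument for the increment covariance of $Y_{\alpha_{1},\alpha_{2}}$ rigorous, since one must verify that the leading-order cancellation in (\ref{co}) indeed lowers the exponent by exactly one and identify the correct constant $d_2(s)$, rather than naively subtracting two asymptotically equivalent expressions. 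Care is needed to ensure the shift $\delta$ in the large time argument produces a genuine $t^{\alpha_2-1}$ term and that no higher-order term survives to spoil the exponent.
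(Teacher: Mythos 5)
Your overall strategy (bilinear expansion of the increment covariance, cancellation of the $\mathbb{E}(X_1^2)\mathbb{E}(N^{\alpha_1,\alpha_2}(\cdot))$ terms because all four covariances share the fixed smaller argument, then a differencing argument against the increment variance) is exactly the paper's, but two quantitative steps are wrong or missing, and your final exponent is not the correct one. First, the numerator: the asymptotic (\ref{co}) gives only the leading term of $\operatorname{Cov}(Y_{\alpha_1,\alpha_2}(s),Y_{\alpha_1,\alpha_2}(t))$, which is independent of $t$; differencing it in the large variable therefore yields $0$ and tells you nothing about the decay rate. This is precisely the obstacle you flag at the end, but it cannot be patched by a mean-value argument applied to (\ref{co}) alone --- you need the second-order expansion $\operatorname{Cov}(Y_{\alpha_1,\alpha_2}(s),Y_{\alpha_1,\alpha_2}(t))\sim \frac{s^{2\alpha_1}}{C_1^2}E^2_{\alpha_1-\alpha_2,2\alpha_1+1}(-C_2s^{\alpha_1-\alpha_2}/C_1)-t^{\alpha_2-1}K(s)$ from Kataria and Khandakar (2019). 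With it, the increment covariance becomes $\mu^2\lambda^2\rho^2(K(s+\delta)-K(s))(t^{\alpha_2-1}-(t+\delta)^{\alpha_2-1})\sim (1-\alpha_2)\delta\mu^2\lambda^2\rho^2(K(s+\delta)-K(s))t^{\alpha_2-2}$, i.e.\ the numerator decays like $t^{\alpha_2-2}$, not $t^{\alpha_2-1}$.

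Second, the denominator: $\operatorname{Var}(Z^{\alpha_1,\alpha_2}_\delta(t))$ does \emph{not} grow like $t^{\alpha_2}$. It equals $\operatorname{Var}(R^{\alpha_1,\alpha_2}(t+\delta))+\operatorname{Var}(R^{\alpha_1,\alpha_2}(t))-2\operatorname{Cov}(R^{\alpha_1,\alpha_2}(t),R^{\alpha_1,\alpha_2}(t+\delta))$, in which the $\mathbb{E}(X_1^2)\mathbb{E}(N^{\alpha_1,\alpha_2}(\cdot))$ contributions survive only through the difference $\mathbb{E}(N^{\alpha_1,\alpha_2}(t+\delta))-\mathbb{E}(N^{\alpha_1,\alpha_2}(t))=O(t^{\alpha_2-1})$, and the $t^{2\alpha_2}$ contributions from the $Y_{\alpha_1,\alpha_2}$-variances and covariance cancel to leading order (this requires the two-term asymptotics (\ref{covt}) together with (\ref{3m})). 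The paper shows $\operatorname{Var}(Z^{\alpha_1,\alpha_2}_\delta(t))\sim \lambda\alpha_2\delta\,\mathbb{E}(X_1^2)t^{\alpha_2-1}/(C_2\Gamma(\alpha_2+1))$. Combining, the correlation decays like $t^{\alpha_2-2}/t^{(\alpha_2-1)/2}=t^{-(3-\alpha_2)/2}$, so $\nu=(3-\alpha_2)/2\in(1,1.5)$, not $1+\alpha_2/2$. Note also that your own stated intermediate rates (numerator $t^{\alpha_2-1}$, denominator $t^{\alpha_2/2}$) would give $\nu=1-\alpha_2/2<1$, i.e.\ LRD rather than SRD, so the argument as written is internally inconsistent as well as quantitatively off.
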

\begin{proof}
Let $s\ge0$ be fixed such that $0\le s+\delta\le t$. From Proposition \ref{covr}, we have
\begin{align*}\label{covz}
\operatorname{Cov}(Z^{\alpha_{1},\alpha_{2}}_{\delta}(s),Z^{\alpha_{1},\alpha_{2}}_{\delta}(t))&=\operatorname{Cov}\left(R^{\alpha_{1}, \alpha_{2}}(s+\delta)-R^{\alpha_{1}, \alpha_{2}}(s),R^{\alpha_{1}, \alpha_{2}}(t+\delta)-R^{\alpha_{1}, \alpha_{2}}(t)\right)\\
&=\operatorname{Cov}\left(R^{\alpha_{1}, \alpha_{2}}(s+\delta),R^{\alpha_{1}, \alpha_{2}}(t+\delta)\right)+\operatorname{Cov}\left(R^{\alpha_{1}, \alpha_{2}}(s),R^{\alpha_{1}, \alpha_{2}}(t)\right)\\
&\ \ \  -\operatorname{Cov}\left(R^{\alpha_{1}, \alpha_{2}}(s+\delta),R^{\alpha_{1}, \alpha_{2}}(t)\right)-\operatorname{Cov}\left(R^{\alpha_{1}, \alpha_{2}}(s),R^{\alpha_{1}, \alpha_{2}}(t+\delta)\right)\\
&=\mu^{2} \lambda^{2} \rho^{2}\big\{\operatorname{Cov}\left(Y_{\alpha_{1},\alpha_{2}}(s), Y_{\alpha_{1},\alpha_{2}}(t)\right)-\operatorname{Cov}\left(Y_{\alpha_{1},\alpha_{2}}(s+\delta), Y_{\alpha_{1},\alpha_{2}}(t)\right)\\
&\ \ -\operatorname{Cov}\left(Y_{\alpha_{1},\alpha_{2}}(s), Y_{\alpha_{1},\alpha_{2}}(t+\delta)\right)+\operatorname{Cov}\left(Y_{\alpha_{1},\alpha_{2}}(s+\delta), Y_{\alpha_{1},\alpha_{2}}(t+\delta)\right)\big\}.
\end{align*}
For large $t$, the asymptotic behaviour of the covariance of mixed inverse stable subordinator is given by (see  Kataria and Khandakar (2019), Eq. (4.3))
\begin{equation}\label{q1q1qqa}
\operatorname{Cov}\left(Y_{\alpha_{1},\alpha_{2}}(s), Y_{\alpha_{1},\alpha_{2}}(t)\right)\sim\frac{s^{2\alpha_{1}}}{C_{1}^{2}} E_{\alpha_{1}-\alpha_{2}, 2\alpha_{1}+1}^{2}\left(-C_{2}s^{\alpha_{1}-\alpha_{2}}/C_{1}\right)-t^{\alpha_2-1}K(s),
\end{equation}
where
\begin{equation*}
K(s)=\frac{s^{\alpha_1+1}}{C_{1}C_{2}\Gamma(\alpha_{2})}\sum_{k=0}^{\infty}\frac{\left(k(\alpha_{1}-\alpha_{2})+\alpha_{1}\right)(-C_{2}s^{(\alpha_{1}-\alpha_{2})}/C_{1})^{k}}{\Gamma\left(k(\alpha_{1}-\alpha_{2})+\alpha_{1}+2\right)}.
\end{equation*}
Using (\ref{q1q1qqa}), we get
\begin{align}\label{covzst}
\operatorname{Cov}(Z^{\alpha_{1},\alpha_{2}}_{\delta}(s),Z^{\alpha_{1},\alpha_{2}}_{\delta}(t))&\sim\mu^{2}\lambda^2\rho^{2}\big(t^{\alpha_2-1}K(s+\delta)+(t+\delta)^{\alpha_2-1}K(s)\nonumber\\
&\hspace{2cm} -(t+\delta)^{\alpha_2-1}K(s+\delta)-t^{\alpha_2-1}K(s)\big)\nonumber\\
&=\mu^{2} \lambda^{2} \rho^{2}(K(s+\delta)-K(s))(t^{\alpha_2-1}-(t+\delta)^{\alpha_2-1})\nonumber\\
&=\mu^{2} \lambda^{2} \rho^{2}\left(K(s+\delta)-K(s)\right)t^{\alpha_2-1}\left(1-\left(1+\frac{\delta }{t}\right)^{\alpha_2-1}\right)\nonumber\\
&\sim(1-\alpha_2)\delta\mu^{2} \lambda^{2} \rho^{2}(K(s+\delta)-K(s))t^{\alpha_2-2}.
\end{align}
The following result will be used (see Kataria and Khandakar (2019), Eq. (4.5) and Eq. (4.6)):
\begin{align}\label{covt}
\operatorname{Cov}\left(Y_{\alpha_{1},\alpha_{2}}(t), Y_{\alpha_{1},\alpha_{2}}(t+\delta)\right)&\sim\frac{t^{2\alpha_{1}}}{C_{1}^{2}} E_{\alpha_{1}-\alpha_{2}, 2\alpha_{1}+1}^{2}\left(-C_{2}t^{\alpha_{1}-\alpha_{2}}/C_{1}\right)-U_{\alpha_{1}, \alpha_{2}}(t)U_{\alpha_{1}, \alpha_{2}}(t+\delta)\nonumber\\
&\ +\frac{(t+\delta)^{\alpha_{1}+\alpha_{2}}}{C_{1}C_{2}}E_{\alpha_{1}-\alpha_{2}, \alpha_{1}+\alpha_{2}+1}\left(-C_{2}(t+\delta)^{\alpha_{1}-\alpha_{2}}/C_{1}\right).
\end{align}
\begin{align}\label{varz}
\operatorname{Var}(Z^{\alpha_{1},\alpha_{2}}_{\delta}(t))&=\operatorname{Var}(R^{\alpha_{1}, \alpha_{2}}(t+\delta))+\operatorname{Var}(R^{\alpha_{1}, \alpha_{2}}(t))-2\operatorname{Cov}\left(R^{\alpha_{1}, \alpha_{2}}(t),R^{\alpha_{1}, \alpha_{2}}(t+\delta)\right)\nonumber\\
&=\mu^{2} \lambda^{2} \rho^{2}\left(\operatorname{Var}\left(Y_{\alpha_{1},\alpha_{2}}(t+\delta)\right)+\operatorname{Var}\left(Y_{\alpha_{1},\alpha_{2}}(t)\right)\right)+\mathbb{E}\left( X_{1}^{2}\right)\big(\mathbb{E}\left(N^{\alpha_{1},\alpha_{2}}(t+\delta)\right)\nonumber\\ 
&\ -\mathbb{E}\left(N^{\alpha_{1},\alpha_{2}}(t)\right)\big)-2\mu^{2} \lambda^{2} \rho^{2} \operatorname{Cov}\left(Y_{\alpha_{1},\alpha_{2}}(t), Y_{\alpha_{1},\alpha_{2}}(t+\delta)\right),\ (\mathrm{by\ Proposition}\ \ref{covr})\ \nonumber\\
&\sim\frac{\mu^{2} \lambda^{2} \rho^{2}}{C_{2}^{2}}\left(\frac{2}{\Gamma(2\alpha_{2}+1)}-\frac{1}{\left(\Gamma(\alpha_{2}+1)\right)^{2}}\right)\left((t+\delta)^{2\alpha_{2}}+t^{2\alpha_{2}}\right)\nonumber\\
&\ +\dfrac{\lambda\mathbb{E}\left( X_{1}^{2}\right)}{C_{2} \Gamma\left(\alpha_{2}+1\right)}\left((t+\delta)^{\alpha_{2}}-t^{\alpha_{2}}\right)-2\mu^{2} \lambda^{2} \rho^{2}\bigg(\frac{t^{2\alpha_{1}}}{C_{1}^{2}} E_{\alpha_{1}-\alpha_{2}, 2\alpha_{1}+1}^{2}\left(-C_{2}t^{\alpha_{1}-\alpha_{2}}/C_{1}\right)\nonumber\\
& \ -U_{\alpha_{1}, \alpha_{2}}(t)U_{\alpha_{1}, \alpha_{2}}(t+\delta)
 +\frac{(t+\delta)^{\alpha_{1}+\alpha_{2}}}{C_{1}C_{2}}E_{\alpha_{1}-\alpha_{2}, \alpha_{1}+\alpha_{2}+1}\left(-C_{2}(t+\delta)^{\alpha_{1}-\alpha_{2}}/C_{1}\right)\bigg),\nonumber\\
&\hspace{7cm}(\mathrm{using}\ (\ref{Tau}),\ (\ref{3.5}),\ (\ref{E}) \ \mathrm{and} \  (\ref{covt}))\nonumber\\
&\sim\frac{\mu^{2} \lambda^{2} \rho^{2}}{C_{2}^{2}}\left(\frac{2}{\Gamma(2\alpha_{2}+1)}-\frac{1}{\left(\Gamma(\alpha_{2}+1)\right)^{2}}\right)t^{2\alpha_{2}}\left(\left(1+\dfrac{\delta}{t}\right)^{2\alpha_{2}}+1\right)\nonumber\\
&\ +\dfrac{\lambda\mathbb{E}\left( X_{1}^{2}\right)}{C_{2} \Gamma\left(\alpha_{2}+1\right)}t^{\alpha_{2}}\left(\left(1+\dfrac{\delta}{t}\right)^{\alpha_{2}}-1\right)-2\mu^{2}\lambda^{2}\rho^{2}\bigg(\frac{t^{2\alpha_{2}}}{C_{2}^{2}\Gamma(2\alpha_{2}+1)}\nonumber\\
&\ -\frac{t^{\alpha_2}(t+\delta)^{\alpha_2}}{C_{2}^{2}\left(\Gamma(\alpha_{2}+1)\right)^{2}}+\frac{(t+\delta)^{2\alpha_{2}}}{C_{2}^{2}\Gamma(2\alpha_{2}+1)}\bigg),\ (\mathrm{using}\ (\ref{3m})\ \mathrm{and}\ (\ref{Tau}))\nonumber\\
&\sim\frac{\mu^{2} \lambda^{2} \rho^{2}}{C_{2}^{2}}\left(\frac{2}{\Gamma(2\alpha_{2}+1)}-\frac{1}{\left(\Gamma(\alpha_{2}+1)\right)^{2}}\right)2t^{2\alpha_{2}}(1+\alpha_2\delta t^{-1})+\dfrac{\lambda\alpha_2\delta\mathbb{E}\left( X_{1}^{2}\right)t^{\alpha_{2}-1}}{C_{2} \Gamma\left(\alpha_{2}+1\right)}\nonumber\\
& \ -\dfrac{2\mu^{2}\lambda^{2}\rho^{2}t^{2\alpha_{2}}}{C_{2}^{2}}\left(\frac{1}{\Gamma(2\alpha_{2}+1)}-\frac{1+\delta\alpha_{2}t^{-1}}{\left(\Gamma(\alpha_{2}+1)\right)^{2}}+\frac{1+2\alpha_{2}\delta t^{-1}}{\Gamma(2\alpha_{2}+1)}\right)\nonumber\\
&= \dfrac{\lambda\alpha_2\delta\mathbb{E}\left( X_{1}^{2}\right)t^{\alpha_{2}-1}}{C_{2} \Gamma\left(\alpha_{2}+1\right)}.
\end{align}
From (\ref{covzst}) and (\ref{varz}), it follows that
\begin{align*}
\operatorname{Corr}\left(Z^{\alpha_{1},\alpha_{2}}_{\delta}(s),Z^{\alpha_{1},\alpha_{2}}_{\delta}(t)\right)&=\dfrac{\operatorname{Cov}\left(Z^{\alpha_{1},\alpha_{2}}_{\delta}(s),Z^{\alpha_{1},\alpha_{2}}_{\delta}(t)\right)}{\sqrt{\operatorname{Var}Z^{\alpha_{1},\alpha_{2}}_{\delta}(s)}\sqrt{\operatorname{Var}Z^{\alpha_{1},\alpha_{2}}_{\delta}(t)}}\\
&\sim\dfrac{(1-\alpha_2)\delta\mu^{2} \lambda^{2} \rho^{2}(K(s+\delta)-K(s))t^{\alpha_2-2}}{\sqrt{\operatorname{Var}Z^{\alpha_{1},\alpha_{2}}_{\delta}(s)}\sqrt{\dfrac{\lambda\alpha_2\delta\mathbb{E}\left( X_{1}^{2}\right)t^{\alpha_{2}-1}}{C_{2} \Gamma\left(\alpha_{2}+1\right)}}},\ \ t\rightarrow\infty.
\end{align*}
Thus,
\begin{equation*}
\operatorname{Corr}\left(Z^{\alpha_{1},\alpha_{2}}_{\delta}(s),Z^{\alpha_{1},\alpha_{2}}_{\delta}(t)\right)\sim d_2(s)t^{-(3-\alpha_{2})/2},\ \ \mathrm{as}\ t\rightarrow\infty.
\end{equation*}
As $1<(3-\alpha_{2})/2<1.5$, the SRD property of MFPNRP is established.
\end{proof}
Kumar {\it et al.} (2019) considered a variant of $\{R^{\alpha}(t)\}_{t\ge0}$ by taking the expected value of the inverse stable subordinator $Y_\alpha(t)$ in its premium. Similarly, we give a variant of the MFRP as follows:
\begin{equation}\label{**}
\tilde{R}^{\alpha_{1},\alpha_{2}}(t)=u+\mu(1+\rho)\lambda U_{{\alpha_{1},\alpha_{2}}}(t)-\sum_{i=1}^{N^{\alpha_{1},\alpha_{2}}(t)}X_{i}, \ \ t\ge0,
\end{equation}
where $U_{{\alpha_{1},\alpha_{2}}}(t)$ is given by (\ref{7}). As in the case of MFRP, here we have $u$ as the initial capital, $\mu$ as the constant premium rate and $\left\{X_{i}\right\}_{i \geq 1}$ as a sequence of iid random variables which is independent of the MFPP. For the non-homogeneous case, we have
\begin{equation*}
\tilde{R}_{\Lambda}^{\alpha_{1},\alpha_{2}}(t)=u+\mu
\mathbb{E}\left(\Lambda(Y_{\alpha_{1},\alpha_{2}}(t))\right)-\sum_{i=1}^{N^{\alpha_{1},\alpha_{2}}_{\Lambda}(t)}X_{i}, \ \ t\ge0.
\end{equation*}
\section{Mixed Fractional Risk Process-II}
In this section, we introduce a second version of the fractional risk process where the number of claims are modelled using the MFPP. Beghin and Macci (2013) introduced  the following fractional version of the classical risk process:
\begin{equation}\label{beghin}
\bar{R}(t)=u+ct-\sum _{i=1}^{N^{\alpha}(t)}X_{i},  \quad t\ge0.
\end{equation}
In the above process, the $X_{i}$'s are iid positive random variables which are independent of the TFPP $\{N^{\alpha}(t)\}_{t\ge0}$. The premium rate is a  constant $c>0$. The risk process (\ref{beghin}) can be generalized as 
\begin{equation}\label{bar}
\bar{R}^{\alpha_{1},\alpha_{2}}(t)=u+ct-\sum_{i=1}^{N^{\alpha_{1},\alpha_{2}}(t)}X_{i}, \ \ t\ge0.
\end{equation}
 We call the process $\{\bar{R}^{\alpha_{1},\alpha_{2}}(t)\}_{t\ge0}$ as the mixed fractional risk process-II (MFRP-II). Note that in this process also the number of claims are modelled using the MFPP. It differs from the MFRP in terms of the premium. Its expected value is
\begin{equation*}
\mathbb{E}\left(\bar{R}^{\alpha_{1},\alpha_{2}}(t)\right)=u+ct-\mu\lambda U_{{\alpha_{1},\alpha_{2}}}(t),
\end{equation*}
where $U_{{\alpha_{1},\alpha_{2}}}(t)$ is given by (\ref{7}).
For $0\leq s\leq t$, the covariance  of MFRP-II is obtained as follows:
\begin{equation*}
\operatorname{Cov}\left(\bar{R}^{\alpha_{1},\alpha_{2}}(s), \bar{R}^{\alpha_{1},\alpha_{2}}(t)\right)=\operatorname{Cov}\left(\sum_{j=1}^{N^{\alpha_{1},\alpha_{2}}(s)} X_{j}, \sum_{i=1}^{N^{\alpha_{1},\alpha_{2}}(t)} X_{i}\right)=I(s,t).
\end{equation*}
From (\ref{covxx}), we get
\begin{equation}\label{covii}
\operatorname{Cov}\left(\bar{R}^{\alpha_{1},\alpha_{2}}(s), \bar{R}^{\alpha_{1},\alpha_{2}}(t)\right)=\mathbb{E}(X_{1}^{2})\mathbb{E}\big(N^{\alpha_{1},\alpha_{2}}(s)\big)+\lambda^{2}(\mathbb{E}(X_1))^{2}\operatorname{Cov}\left(Y_{\alpha_{1},\alpha_{2}}(s), Y_{\alpha_{1},\alpha_{2}}(t)\right).
\end{equation}
Substituting $s=t$ gives its variance as
\begin{equation}\label{varii}
\operatorname{Var}\left(\bar{R}^{\alpha_{1},\alpha_{2}}(t)\right)=\mathbb{E}\left( X_{1}^{2}\right)\mathbb{E}\left(N^{\alpha_{1},\alpha_{2}}(t)\right)+\lambda^{2}(\mathbb{E}(X_1))^{2} \operatorname{Var}\left(Y_{\alpha_{1},\alpha_{2}}(t)\right).
\end{equation}
\begin{remark}
It is important to note that the expressions obtained for the variance and covariance of MFRP-II are almost similar to that of the MFRP. The difference is that of the safety loading factor $\rho$. Thus, as in the case of MFRP, it follows that the MFRP-II also exhibits the LRD property. For the same reason, the related increment process for the MFRP-II defined as 
\begin{equation}\label{qlq2}
\bar{Z}^{\alpha_{1},\alpha_{2}}_{\delta}(t)\coloneqq\bar{R}^{\alpha_{1}, \alpha_{2}}(t+\delta)-\bar{R}^{\alpha_{1}, \alpha_{2}}(t),
\end{equation}
has the SRD property.
\end{remark}
\subsection{Ruin Probabilities}
Biard and Saussereau (2014) derived some expressions of the ruin probabilities for fractional risk process defined in (\ref{beghin}). They consider the light-tailed and heavy-tailed distributions of claim sizes. Here, we obtain the ruin probabilities of MFRP-II in the presence of exponential and subexponential claim sizes.
\subsubsection{Exponentially distributed claim sizes}
Assume that the claim sizes $X_i$'s in the MFRP-II defined in (\ref{bar}) are exponentially distributed with parameter $\mu>0$. Its ruin time $T$ is defined as 
\begin{equation}\label{qaxs1z}
T\coloneqq\inf\{t>0:\bar{R}^{\alpha_{1},\alpha_{2}}(t)<0\}.
\end{equation}
Note that, $T=\infty$ if $\bar{R}^{\alpha_{1},\alpha_{2}}(t)>0$ for all $t$.
Using Theorem 1 of Borokov and Dickson (2008) with interarrival time density   $f_{W}^{\alpha_{1},\alpha_{2}}(t)$ given in (\ref{density}), the probability density function $f_{T}(t)$ of the ruin time $T$ is given by
\begin{equation*}
f_{T}(t)=e^{-\mu(u+ct)}\sum_{n=0}^{\infty}\dfrac{\mu^{n}(u+ct)^{n-1}}{n!}\left(u+\frac{ct}{n+1}\right)(f_{W}^{\alpha_{1},\alpha_{2}})^{*(n+1)}(t),
\end{equation*}
where $(f_{W}^{\alpha_{1},\alpha_{2}})^{*(n+1)}(t)$ denotes the $(n+1)$-fold convolution of the function $f_{W}^{\alpha_{1},\alpha_{2}}(t)$.

The ruin probability $\psi_{u}(t)$ of the MFRP-II in finite time $t$ is
$\psi_{u}(t)=\mathrm{Pr}\{T\le t<\infty\}$. Its Laplace transform  $\tilde{\psi}_{u}(s)$  can be obtained by Theorem 1 of Malinovskii (1998) using (\ref{laplace}) as
\begin{equation*}
\tilde{\psi}_{u}(s)=s^{-1}y(s) \exp \{-u \mu(1-y(s))\}, \quad s>0,
\end{equation*}
where $y(s)$ is a solution of 
\begin{equation*}
y(s)=\lambda/\left(C_{1}(s + c\mu(1-y(s)))^{\alpha_{1}}+C_{2}(s +c \mu(1-y(s)))^{\alpha_{2}}+\lambda\right). 
\end{equation*}

\subsubsection{Subexponentially distributed claim sizes}
Let the distribution function $F_{X_{1}}(t)=\mathrm{Pr}\{X_1\leq t\}$ of the claim sizes $X_i$'s in the MFRP-II be subexponential, that is, $\lim\limits_{t\to \infty}(1-F_{X_{1}}^{*2}(t))/(1-F_{X_{1}}(t))=2$. The following result related to subexponential distribution will be used (see Asmussen and Albrecher (2010)).
\begin{lemma}\label{qlap22}
Let $\{X_{i}\}_{i\ge1}$ be a sequence of iid random variables having subexponential distribution $\bar{F}_{X_{1}}(t)=\mathrm{Pr}\{X_{1}>t\}$. Then, as $t\to \infty$ it holds that
\begin{equation*}
\mathrm{Pr}\left\{\sum_{i=1}^{N}X_{i}>t\right\}\sim \mathbb{E}(N)\bar{F}_{X_{1}}(t),
\end{equation*}
where $N$ is an integer valued random variable with $\mathbb{E}(z^{N})<\infty$ for some $z>1$ and it is independent of $\{X_{i}\}_{i\ge1}$.
\end{lemma}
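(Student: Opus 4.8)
The plan is to condition on the value of $N$ and express the tail of the random sum as a series in the convolution-power tails of $F_{X_1}$, then pass to the limit term by term. Write $p_n=\mathrm{Pr}\{N=n\}$ and let $\bar{F}_{X_1}^{*n}(t)=\mathrm{Pr}\{X_1+\cdots+X_n>t\}$ denote the tail of the $n$-fold convolution, with the convention $\bar{F}_{X_1}^{*0}(t)=0$ for $t\ge0$. Independence of $N$ and $\{X_i\}_{i\ge1}$ gives
\begin{equation*}
\mathrm{Pr}\left\{\sum_{i=1}^{N}X_i>t\right\}=\sum_{n=0}^{\infty}p_n\,\bar{F}_{X_1}^{*n}(t),
\end{equation*}
and hence, after dividing by $\bar{F}_{X_1}(t)$, the assertion reduces to showing that
\begin{equation*}
\sum_{n=0}^{\infty}p_n\,\frac{\bar{F}_{X_1}^{*n}(t)}{\bar{F}_{X_1}(t)}\longrightarrow\sum_{n=0}^{\infty}p_n\cdot n=\mathbb{E}(N),\qquad t\to\infty.
\end{equation*}

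The first ingredient is the pointwise statement that, for each fixed $n$, $\bar{F}_{X_1}^{*n}(t)/\bar{F}_{X_1}(t)\to n$ as $t\to\infty$. I would prove this by induction on $n$: the base case $n=2$ is exactly the subexponentiality hypothesis $\lim_{t\to\infty}(1-F_{X_1}^{*2}(t))/(1-F_{X_1}(t))=2$, and for the inductive step one writes $\bar{F}_{X_1}^{*(n+1)}(t)=\mathrm{Pr}\{S_n+X_{n+1}>t\}$ with $S_n=X_1+\cdots+X_n$ and uses the long-tailedness property $\bar{F}_{X_1}(t-y)/\bar{F}_{X_1}(t)\to1$ (a standard consequence of subexponentiality) together with the inductive hypothesis.

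The decisive step, and the one I expect to require the most care, is justifying the interchange of the limit $t\to\infty$ with the infinite sum over $n$, since the pointwise limits alone give no control over the tail of the series. For this I would invoke Kesten's bound for subexponential distributions: for every $\varepsilon>0$ there exists $K=K(\varepsilon)<\infty$, independent of $n$ and $t$, with
\begin{equation*}
\frac{\bar{F}_{X_1}^{*n}(t)}{\bar{F}_{X_1}(t)}\le K(1+\varepsilon)^{n},\qquad n\ge1,\ t\ge0.
\end{equation*}
The hypothesis that $\mathbb{E}(z^{N})<\infty$ for some $z>1$ is precisely what makes this usable: choosing $\varepsilon$ with $1<1+\varepsilon\le z$ produces the summable dominating bound $p_n K(1+\varepsilon)^{n}$, since $\sum_{n}p_nK(1+\varepsilon)^{n}=K\,\mathbb{E}((1+\varepsilon)^{N})\le K\,\mathbb{E}(z^{N})<\infty$. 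Dominated convergence with respect to counting measure then lets me take the limit inside the sum, and combining it with the pointwise limit $\bar{F}_{X_1}^{*n}(t)/\bar{F}_{X_1}(t)\to n$ yields $\mathbb{E}(N)$, establishing the lemma.
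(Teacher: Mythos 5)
Your proof is correct, but note that the paper does not prove this lemma at all: it is stated as a known result and attributed to Asmussen and Albrecher (2010), so there is no in-paper argument to compare against. What you have written is precisely the standard proof from that literature: the decomposition $\mathrm{Pr}\{\sum_{i=1}^{N}X_i>t\}=\sum_{n}p_n\bar{F}_{X_1}^{*n}(t)$, the pointwise asymptotics $\bar{F}_{X_1}^{*n}(t)\sim n\,\bar{F}_{X_1}(t)$ proved by induction via long-tailedness, and Kesten's bound combined with the hypothesis $\mathbb{E}(z^N)<\infty$ to dominate the series and pass to the limit. You have correctly identified the two essential ingredients and, in particular, the reason the moment condition on $N$ is stated the way it is; the only step you gloss over is the inductive one for the pointwise limit, which itself requires a small dominated-convergence argument over the convolution integral $\bar{F}_{X_1}^{*(n+1)}(t)=\bar{F}_{X_1}(t)+\int_0^t\bar{F}_{X_1}^{*n}(t-y)\,\mathrm{d}F_{X_1}(y)$, but this is standard and your sketch points in the right direction.
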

The following inequalities holds for the finite time ruin probability of MFRP-II:
\begin{align}
\mathrm{Pr}\left\{\sum_{i=1}^{N^{\alpha_{1},\alpha_{2}}(t)}X_{i}>u+ct\right\}&\le\mathrm{Pr}\left\{\sum_{i=1}^{N^{\alpha_{1},\alpha_{2}}(t^*)}X_{i}>u+ct^*\ \mathrm{for\ some}\ t^* \leq t<\infty\right\}\nonumber\\
&\le\mathrm{Pr}\left\{\sum_{i=1}^{N^{\alpha_{1},\alpha_{2}}(t)}X_{i}>u\right\}.\label{ine} 
\end{align}
The pgf of MFPP is given in (\ref{qmmmqa1}) which is finite for some $z>1$. Applying Lemma \ref{qlap22} to (\ref{ine}), we get
\begin{equation*}
\mathrm{Pr}\left\{\bar{R}^{\alpha_{1},\alpha_{2}}(t^*)<0\ \mathrm{for\ some}\ t^* \leq t<\infty\right\}\sim\mathbb{E}\left(N^{\alpha_{1},\alpha_{2}}(t)\right)\bar{F}_{X_{1}}(u)\ \ \mathrm{as}\ u\rightarrow \infty.
\end{equation*}
Here, we have used that $\bar{F}_{X_{1}}(u+t)\sim\bar{F}_{X_{1}}(u)$ as $u\rightarrow\infty$. Thus, the ruin probability in finite time has the following asymptotic behaviour as $u\rightarrow \infty$
\begin{equation*}
\mathrm{Pr}\left\{\bar{R}^{\alpha_{1},\alpha_{2}}(t^*)<0\ \mathrm{for\ some}\ t^* \leq t<\infty\right\}\sim\lambda U_{{\alpha_{1},\alpha_{2}}}(t)\bar{F}_{X_{1}}(u),
\end{equation*}
where $U_{{\alpha_{1},\alpha_{2}}}(t)$ is given by (\ref{7}).

\printbibliography

\begin{thebibliography}{1}
\bibitem{1}
Aletti, G., Leonenko, N. and Merzbach, E. (2018). Fractional Poisson fields and martingales, {\it J. Stat. Phys.} 170(4), 700-730 .

\bibitem{23}
Asmussen, S. and Albrecher, H. (2010). Ruin Probabilities (Adv. Ser. Statist. Sci. Appl. Prob. 14), second ed. World Scientific, Hackensack, NJ.
\bibitem{9}
Applebaum, D. (2004). L\'evy Processes and Stochastic Calculus. Cambridge University Press, Cambridge.

\bibitem{12}
Beghin, L. (2012). Random-time processes governed by differential equations
of fractional distributed order. {\it Chaos Solitons Fractals.} 45(11), 1314-1327.

\bibitem{19}
Beghin, L. and Macci, C. (2013). Large deviations for fractional Poisson processes. {\it Statist. Probab. Lett.} 83(4), 1193-1202.

\bibitem{36}
Beghin, L. and Macci, C. (2014). Fractional discrete processes: compound and mixed Poisson representations. {\it J. Appl. Probab.} 51(1), 19-36. 

\bibitem{Biard2014}
Biard, R. and Saussereau, B. (2014). Fractional Poisson process: long-range dependence
and applications in ruin theory. {\it J. Appl. Probab.} {51}(3), 727-740.

\bibitem{22}
Borokov, K. A. and Dickson, D. C. M. (2008). on the ruin time distribution for a Sparre Andersen process with exponential claim sizes. {\it Insurance Math. Econom.} 42(3), 1104-1108.

\bibitem{30}
Constantinescu, Corina D., Ramirez, Jorge M.
and Zhu, Wei R. (2019). An application of fractional differential equations to risk theory. {\it Finance Stoch.} 23(4), 1001-1024.

\bibitem{31}
Constantinescu, C., Samorodnitsky, G. and  Zhu, W. (2018). Ruin probabilities in classical risk models with gamma claims. {\it Scand. Actuar. J.} 7, 555-575.


\bibitem{11}
D'Ovidio, M. and Nane, E. (2014). Time dependent random fields on spherical non-homogeneous surfaces. {\it Stochastic Process. Appl.} 124(6), 2098-2131.

\bibitem{43}
Haubold, H. J., Mathai, A. M. and Saxena, R. K. (2011). Mittag-Leffler functions and their applications. {\it J. Appl. Math.} Art. ID 298628, 51 pp. 
\bibitem{10}
Kataria, K. K. and Khandakar, M. (2019). On the Long-Range Dependence of Mixed Fractional Poisson Process. arXiv:1910.05854v2.

\bibitem{10}
Kumar, A., Leonenko, N. and Pichler, A. (2019). Fractional risk process in insurance. {\it Math. Finan. Econ.} https://doi.org/10.1007/s11579-019-00244-y.

\bibitem{Leonenko2014}
Leonenko, N. N., Meerschaert, M. M., Schilling, R. L., Sikorskii, A. (2014). Correlation structure of time-changed  L\'evy processes. {\it Commun. Appl. Ind. Math.} 6(1), e-483,22. 

\bibitem{20}
Malinovskii, V.K. (1998). Non-Poissonian claims' arrivals and calculation of the probability of ruin. {\it Insurance Math. Econom.} 22(2), 123-138.
\bibitem{8}
Mathai, A. M. and Haubold, H. J. (2008). Special Functions for Applied Scientists. New York, Springer.

\bibitem{Mikosch2009}
Mikosch, T. (2009). Non-life Insurance Mathematics: An Introduction with the Poisson Process. Springer, Berlin.

\bibitem{6}
Veillette, M. and Taqqu, M. S. (2010). Numerical computation of first passage times of increasing L\'evy Processes. {\it Methodol. Comput. Appl. Probab.} 12(4), 695-729.
 	
\end{thebibliography}
\end{document}